\theoremstyle{definition}
\newtheorem{defi}{Definition}[section]
\theoremstyle{plain}
\newtheorem{teo}[defi]{Theorem}
\newtheorem{lema}[defi]{Lemma}
\theoremstyle{remark}
\newtheorem{remark}[defi]{Remark}
\DeclareMathOperator*{\dive}{div}
\DeclareMathOperator*{\R}{\mathbb{R}}
\title[{The first nontrivial eigenvalue for a system of $p-$Laplacians}]
{The first nontrivial eigenvalue for a system of $p-$Laplacians with Neumann and 
Dirichlet boundary conditions}
\author[L. M. Del Pezzo and J. D. Rossi]
{Leandro M. Del Pezzo and Julio D. Rossi}
\address{Leandro M. Del Pezzo \and Julio D. Rossi
\hfill\break\indent
CONICET and Departamento  de Matem{\'a}tica, FCEyN,
Universidad de Buenos Aires,
\hfill\break\indent Pabellon I, Ciudad Universitaria (1428),
Buenos Aires, Argentina.}
\email{{\tt ldpezzo@dm.uba.ar,
jrossi@dm.uba.ar
}}
\keywords{$p-$Laplacian, systems, eigenvalues, Neumann and Dirichlet boundary conditions. \\
\indent 2010 {\it Mathematics Subject Classification: } 35P30, 35J57,  35J92. 
% 35P30  	Nonlinear eigenvalue problems, nonlinear spectral theory
% 35J57  	Boundary value problems for second-order elliptic systems
% 	35J92  	Quasilinear elliptic equations with $p$-Laplacian
}
\thanks{
Leandro M. Del Pezzo was partially supported by
CONICET PIP 5478/1438  (Argentina) and Julio D. Rossi  was partially supported
by MTM2011-27998, (Spain)}
\begin{document}

\begin{abstract} 
	We deal with the first eigenvalue for a system of two 
	$p-$Laplacians with Dirichlet and Neumann boundary conditions. If $\Delta_{p}w=\dive(|\nabla w|^{p-2}\nabla w)$ stands for the 
	$p-$Laplacian and $\frac{\alpha}{p}+\frac{\beta}{q}=1,$ we consider
	$$
		\begin{cases}
			-\Delta_pu=  \lambda \alpha |u|^{\alpha-2} u|v|^{\beta}
			&\text{ in }\Omega,\\
			-\Delta_q v=  \lambda \beta |u|^{\alpha}|v|^{\beta-2}v
			&\text{ in }\Omega,\\
		\end{cases}
	$$	
	with mixed boundary conditions
	$$
		u=0, \qquad |\nabla v|^{q-2}\dfrac{\partial v}{\partial \nu }=0,
		\qquad \text{on }\partial \Omega.
	$$
	We show that there is a first non trivial eigenvalue that can be characterized by the 
	variational minimization problem
 	$$
 		\lambda_{p,q}^{\alpha,\beta}  = \min
 		\left\{\dfrac{\displaystyle\int_{\Omega}\dfrac{|\nabla u|^p}{p}\, 
 		dx +\int_{\Omega}\dfrac{|\nabla v|^q}{q}\, dx}
        {\displaystyle\int_{\Omega} |u|^\alpha|v|^{\beta}\, dx}
         \colon (u,v)\in \mathcal{A}_{p,q}^{\alpha,\beta}\right\},
 	$$
 	where $$\mathcal{A}_{p,q}^{\alpha,\beta}=\left\{(u,v)\in W^{1,p}_0(\Omega)\times
   	W^{1,q}(\Omega)\colon
    uv\not\equiv0\text{ and }\int_{\Omega}|u|^{\alpha}|v|^{\beta-2}v
    \, dx=0\right\}.$$
 
 	We also study the limit of $\lambda_{p,q}^{\alpha,\beta} $ 
 	as $p,q\to \infty$ assuming that  $\frac{\alpha}{p} \to \Gamma \in (0,1)$, 
 	and  $ \frac{q}{p} \to Q \in (0,\infty)$ as $p,q\to \infty.$ 
 	We find that this limit problem interpolates between the pure Dirichlet 
 	and Neumann cases for a single equation when we take $Q=1$ and the limits 
 	$\Gamma \to 1$ and $\Gamma \to 0$.
\end{abstract}

\maketitle

\centerline{\it Dedicated to Juan Luis  Vazquez, a great mathematician. }

%%%%%%%%%%%%%%%%%%%%%%%%%%%%%%%%%%%%%%%%%%%%%%%%%%%%%%%%%%%%%%%%%%%%%%%%%%%%%%%%%%
\section{Introduction}\label{intro}
%%%%%%%%%%%%%%%%%%%%%%%%%%%%%%%%%%%%%%%%%%%%%%%%%%%%%%%%%%%%%%%%%%%%%%%%%%%%%%%%%%
	Let $\Omega$ be  bounded domain in $\mathbb{R}^N$ with smooth boundary,
	$1<p,q,<\infty,$ and $0<\alpha,\beta$ such that
	\[
		\dfrac{\alpha}{p}+\dfrac{\beta}{q}=1.
	\]
	The aim of this work is to study the following eigenvalue problem
	\begin{equation}\label{eq:problema}
		\begin{cases}
			-\Delta_pu=  \lambda \alpha |u|^{\alpha-2} u|v|^{\beta}
			&\text{ in }\Omega,\\
			-\Delta_q v=  \lambda \beta |u|^{\alpha}|v|^{\beta-2}v
			&\text{ in }\Omega,\\
		\end{cases}
	\end{equation}
	with mixed boundary conditions
	\begin{equation}\label{eq:bc}
		u=0, \qquad |\nabla v|^{q-2}\dfrac{\partial v}{\partial \nu }=0,
		\qquad \text{on }\partial \Omega.
	\end{equation}
	Here $\Delta_{p}w=\dive(|\nabla w|^{p-2} \nabla w)$ is the usual $p-$Laplacian and
	$\frac{\partial}{\partial \nu}$ is the outer normal derivative.
	
	Our first result is a variational characterization of the first non trivial 
	eigenvalue of our problem.
	
	 \begin{teo}\label{teo:autovalor}
	   If  $\beta>1$ and $p\ge N$ or $q>N,$ 
	   then the first non trivial 
	    eigenvalue is given by
        \begin{equation}\label{eq:autovalor}
			\lambda_{p,q}^{\alpha,\beta} \coloneqq \inf
                 \left\{
                     \dfrac{\displaystyle\int_{\Omega}
                     \dfrac{|\nabla u|^p}{p}\, dx +
                      \int_{\Omega}\dfrac{|\nabla v|^q}{q}\, dx}
                      {\displaystyle\int_{\Omega} |u|^\alpha|v|^{\beta}\, dx}
                      \colon
                      (u,v)\in\mathcal{A}_{p,q}^{\alpha,\beta}
                 \right\}
		\end{equation}
        where
        \[
            \mathcal{A}_{p,q}^{\alpha,\beta}\coloneqq\left\{(u,v)\in W^{1,p}_0(\Omega)\times
             W^{1,q}(\Omega)\colon
             uv\not\equiv0\text{ and }\int_{\Omega}|u|^{\alpha}|v|^{\beta-2}v
             \, dx=0\right\}.
        \]
	\end{teo}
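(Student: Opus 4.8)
The plan is to realize $\lambda_{p,q}^{\alpha,\beta}$ as the minimum of the energy $J(u,v)=\int_{\Omega}\frac{|\nabla u|^{p}}{p}\,dx+\int_{\Omega}\frac{|\nabla v|^{q}}{q}\,dx$ subject to the two constraints $G(u,v):=\int_{\Omega}|u|^{\alpha}|v|^{\beta}\,dx=1$ and $H(u,v):=\int_{\Omega}|u|^{\alpha}|v|^{\beta-2}v\,dx=0$, and then to identify the Euler--Lagrange equations of this constrained problem with \eqref{eq:problema}--\eqref{eq:bc}. The first observation is that the Rayleigh quotient is invariant under the anisotropic scaling $(u,v)\mapsto(t^{q}u,t^{p}v)$, $t>0$ (this uses $\frac{\alpha}{p}+\frac{\beta}{q}=1$, which makes numerator and denominator both scale like $t^{pq}$), and that this scaling also preserves $H=0$. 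Hence normalizing $G=1$ costs nothing, and it suffices to minimize $J$ on $\{G=1,\,H=0\}$.

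First I would run the direct method. Take a minimizing sequence $(u_{n},v_{n})$ with $G(u_{n},v_{n})=1$, $H(u_{n},v_{n})=0$ and $J(u_{n},v_{n})\to\lambda_{p,q}^{\alpha,\beta}$. The bound on $J$ gives $\|\nabla u_{n}\|_{p}$ and $\|\nabla v_{n}\|_{q}$ bounded, and Poincar\'e in $W^{1,p}_{0}(\Omega)$ bounds $\|u_{n}\|_{W^{1,p}}$. The delicate point, where no Poincar\'e inequality is available, is to bound the Neumann component $v_{n}$ in $W^{1,q}(\Omega)$; this is exactly where the hypotheses $\beta>1$ and ``$p\ge N$ or $q>N$'' enter. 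Writing $v_{n}=\bar v_{n}+\tilde v_{n}$ with $\bar v_{n}$ the mean, Poincar\'e--Wirtinger controls $\tilde v_{n}$, and under $q>N$ Morrey's embedding makes $\tilde v_{n}$ bounded in $C(\overline\Omega)$. Thus if $|\bar v_{n}|\to\infty$ then $v_{n}$ would keep a constant sign throughout $\Omega$; since $\beta>1$ and $u_{n}\not\equiv0$ (as $G=1$), this makes $\int_{\Omega}|u_{n}|^{\alpha}|v_{n}|^{\beta-2}v_{n}\,dx$ strictly positive (or strictly negative), contradicting $H(u_{n},v_{n})=0$. Hence $\bar v_{n}$, and so $\|v_{n}\|_{W^{1,q}}$, stays bounded; in the regime $p\ge N$ the extra integrability of $u$ plays the analogous role.

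With both sequences bounded I extract $u_{n}\rightharpoonup u$ in $W^{1,p}_{0}$ and $v_{n}\rightharpoonup v$ in $W^{1,q}$, with strong convergence in $L^{p}$ and $L^{q}$ by Rellich--Kondrachov. The exponent relation $\frac{\alpha}{p}+\frac{\beta}{q}=1$ lets me pass the constraints to the limit by generalized H\"older: $G(u_{n},v_{n})\to G(u,v)$, so $G(u,v)=1$ and in particular $uv\not\equiv0$, and likewise $H(u,v)=0$ (here $\frac{\alpha}{p}+\frac{\beta-1}{q}<1$, so the relevant products converge in $L^{1}$). Convexity of $t\mapsto|t|^{p}$ and $t\mapsto|t|^{q}$ gives weak lower semicontinuity of $J$, whence $J(u,v)\le\lambda_{p,q}^{\alpha,\beta}$; since $(u,v)$ is admissible the reverse inequality is automatic, and the infimum is attained.

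Finally I derive the equation. After checking the constraint qualification, that $G'(u,v)$ and $H'(u,v)$ are linearly independent at the minimizer, the Lagrange multiplier rule yields $\mu,\tau\in\R$ with $J'(u,v)=\mu\,G'(u,v)+\tau\,H'(u,v)$, i.e.\ the weak form of
\[
-\Delta_{p}u=\mu\alpha|u|^{\alpha-2}u|v|^{\beta}+\tau\alpha|u|^{\alpha-2}u|v|^{\beta-2}v,
\qquad
-\Delta_{q}v=\mu\beta|u|^{\alpha}|v|^{\beta-2}v+\tau(\beta-1)|u|^{\alpha}|v|^{\beta-2},
\]
the Dirichlet condition on $u$ coming from $W^{1,p}_{0}$ and the Neumann condition on $v$ appearing as the natural boundary condition. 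Testing the second equation with the constant $\psi\equiv1$ (admissible since $v\in W^{1,q}(\Omega)$) kills the left-hand side and the $\mu$-term (because $H(u,v)=0$), leaving $\tau(\beta-1)\int_{\Omega}|u|^{\alpha}|v|^{\beta-2}\,dx=0$; as $\beta>1$ and $\int_{\Omega}|u|^{\alpha}|v|^{\beta-2}\,dx>0$ (it is the integral of a nonnegative function that is positive on $\{uv\neq0\}$), this forces $\tau=0$ and recovers exactly \eqref{eq:problema}--\eqref{eq:bc} with $\lambda=\mu$. Testing the two equations with $u$ and $v$ and using $\frac{\alpha}{p}+\frac{\beta}{q}=1$ identifies $\mu=J(u,v)=\lambda_{p,q}^{\alpha,\beta}$. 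I expect the genuine obstacle to be the compactness step, namely controlling $\|v_{n}\|_{W^{1,q}}$ and justifying the limit passage in the singular weight $|u|^{\alpha}|v|^{\beta-2}$ when $1<\beta<2$ (where its finiteness is also what makes the weak formulation legitimate); this is precisely what the regularity hypotheses are there to secure.
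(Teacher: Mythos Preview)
Your argument is correct and follows the same overall scheme as the paper: the direct method on the normalized constraint set $\{G=1,\,H=0\}$, followed by the Euler--Lagrange identification. Two points of comparison are worth noting. First, for the compactness of $\{v_n\}$ the paper does not use your mean--oscillation decomposition; instead it prepares Poincar\'e-type inequalities on the constraint set in advance (Lemmas~\ref{lema:AuxPoincare1}--\ref{lema:Poincare2}): for $q>N$ it uses that the constraint forces $v_n$ to vanish somewhere (so a Poincar\'e inequality for functions with a zero applies), while for $\beta>1,\ p\ge N$ it proves directly that $\|v\|_{L^q}\le C(u)\|\nabla v\|_{L^q}$ for $v\in\mathcal{A}_{p,q}^{\alpha,\beta}(u)$. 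Your contradiction argument ``if $|\bar v_n|\to\infty$ then $v_n$ has a sign'' is essentially the same mechanism, and is transparent when $q>N$; in the $p\ge N$ regime you should be more explicit, since $\tilde v_n$ need not be bounded in $L^\infty$ and the sign argument has to be replaced by the observation that $v_n/\|v_n\|_{L^q}$ converges to a nonzero constant, which together with $|u_n|^\alpha\to|u|^\alpha$ in $L^{q/(q-\beta+1)}$ (this is where $p\ge N$ is used) forces $u\equiv0$ and then contradicts $G=1$ --- this is exactly the content of the paper's Lemma~\ref{lema:ConsantePoincare}. Second, you carry out the two-multiplier Euler--Lagrange step and kill the extra multiplier by testing with $\psi\equiv1$, which the paper simply takes for granted; conversely, the paper records (just before the proof) the easy reverse inequality --- that every nontrivial eigenpair lies in $\mathcal{A}_{p,q}^{\alpha,\beta}$ with Rayleigh quotient equal to its eigenvalue, hence $\lambda\ge\lambda_{p,q}^{\alpha,\beta}$ --- which you should add to conclude that the value you produce is indeed the \emph{first} nontrivial eigenvalue.
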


	Next we want to study the behaviour of this first non trivial eigenvalue for large values of $p$ and $q$. We look at the limit as $p,q\to\infty$ of 
	$\lambda_{p,q}^{\alpha,\beta} $. To this end, we assume that
	\begin{equation}\label{eq:A}
	 	\tag{A}
	  	\frac{\alpha}{p} \to \Gamma  \in (0,1) \qquad \mbox{and} \qquad  
		 \frac{q}{p} \to Q\in(0,\infty) \qquad \mbox{as } p,q\to \infty.
	\end{equation}
	Observe that, since $\frac{\alpha}p + \frac{\beta}{q} =1,$ 
	we also get the following limit:
	$$
		\frac{\beta}{q} \to 1-\Gamma \quad\mbox{as } p,q\to\infty.
	$$

	\begin{teo}\label{teo:autovalor.limite.intro}
 		Under the assumption \eqref{eq:A},
 		there exists a sequence $\{(p_n,q_n)\}_{n\in\mathbb{N}}$ with
 		$p_n,q_n \to \infty,$ such that
		$$
			 u_{n} \to u_\infty,\quad v_{n} \to v_\infty
			 \quad \mbox{ uniformly in } \overline{\Omega}
			 \mbox{ as } n\to \infty, 
		$$
		where $(u_n,v_n)$ is an eigenfunction
		corresponding to $\lambda_{p_n,q_n}^{\alpha_n,\beta_n}$ normalized according to $\int_{\Omega} |u_n|^\alpha|v_n|^{\beta}\, dx =1$ for all $n\in \mathbb{N}.$
		Moreover, 
		$$
			\left(\lambda_{p,q}^{\alpha,\beta}\right)^{1/p} 
			\to \Lambda_\infty(\Gamma,Q) \coloneqq
			\inf \left\{
			\frac{\displaystyle \max \left\{ 
			\| \nabla w \|_{L^\infty (\Omega)}; \,  
			\| \nabla z \|^Q_{L^\infty (\Omega)}\right\} }{
			\displaystyle \| |w|^{\Gamma} |z|^{(1-
			\Gamma)Q} \|_{L^\infty (\Omega)}}
			\colon(w,z)\in\mathcal{A_{\infty}}\right\}
 		$$
		as $p,q\to \infty.$ Here 
		\[
		\begin{array}{l}
			\displaystyle \mathcal{A}_{\infty}
			\coloneqq \left\{(w,z)\in W^{1,\infty}_0(\Omega)
			 \times W^{1,\infty} (\Omega)\colon
			wz\not\equiv0\mbox{ and } 
			\right.\\
			\displaystyle \left.\qquad\qquad\qquad \qquad \qquad
			\max_{x\in\Omega} |w|^\Gamma |z_+|^{(1-\Gamma)Q}  =  
			 \max_{x\in\Omega} |w|^\Gamma |z_-|^{(1-\Gamma)Q} 
			\right\},	
		\end{array}
		\]
		where $z_+$ and $z_-$ stand for the 
		positive and negative parts of $z$ respectively.

		In addition, this limit $(u_\infty, v_\infty)$ is a solution 
		to the minimization problem for $\Lambda_\infty(\Gamma,Q)$ and 
		a viscosity solution to
		\begin{equation} \label{eq.u}
		\begin{cases}
			\min\left\{-\langle D^2 u\cdot Du,Du\rangle, |Du|-
				\Lambda_\infty(\Gamma,Q) 
				u^{\Gamma} |v_\infty|^{(1-\Gamma)Q}
				\right\}=0  & \mbox{ in }\Omega, \\
				u=0 & \mbox{ on } \partial \Omega,
		\end{cases}
		\end{equation}
		and
		\begin{equation}\label{eq.v}
				\begin{cases}
 					\min\left\{ -\langle D^2v Dv ,Dv\rangle , 
 					|Dv|-\Lambda_\infty(\Gamma,Q)^{\nicefrac{1}{Q}} 
 					u_\infty^{\nicefrac{\Gamma}{Q}} |v|^{1-\Gamma}
 					\right\} 
 					=0 & \text{ in } 
 					\{v>0\}, \\
 					\max\left
 					\{ -\langle D^2vDv,Dv\rangle, -|Dv|+\Lambda_\infty
 					(\Gamma,Q)^{\nicefrac{1}{Q}} 
 					u_\infty^{\nicefrac{\Gamma}{Q}} |v|^{1-\Gamma}
 					\right\}=0  & \text{ in } 
 					\{v<0\}, \\
  					-\langle D^2vDv,Dv\rangle =0 & \text{ in } 
  					\{v=0\},\\
  					\dfrac{\partial v}{\partial \nu}  = 0 & \text{ on } 
 					\partial \Omega.
				\end{cases}
		\end{equation}
	\end{teo}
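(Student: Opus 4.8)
The plan is to follow the by-now classical route from $p$-Laplacian eigenvalue problems to their $L^\infty$ counterparts (the $\infty$-eigenvalue problem of Juutinen--Lindqvist--Manfredi), carried out here for the coupled system and adapted to the mixed Dirichlet/Neumann setting. First I would establish uniform a priori bounds. Using the normalization $\int_\Omega |u_n|^{\alpha_n}|v_n|^{\beta_n}\,dx=1$, the Rayleigh quotient gives $\int_\Omega|\nabla u_n|^{p_n}\,dx\le p_n\lambda_{p_n,q_n}$ and $\int_\Omega|\nabla v_n|^{q_n}\,dx\le q_n\lambda_{p_n,q_n}$, so $\|\nabla u_n\|_{L^{p_n}}\le(p_n\lambda_n)^{1/p_n}$ and $\|\nabla v_n\|_{L^{q_n}}\le(q_n\lambda_n)^{1/q_n}$. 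Granting the boundedness of $(\lambda_n)^{1/p_n}$ and using $(p_n)^{1/p_n}\to1$, these limit to $\limsup\|\nabla u_n\|_{L^{p_n}}\le\Lambda_\infty$ and, since $p_n/q_n\to1/Q$, to $\limsup\|\nabla v_n\|_{L^{q_n}}\le\Lambda_\infty^{1/Q}$. For any fixed $m$, H\"older's inequality then bounds $\|\nabla u_n\|_{L^m}$ and $\|\nabla v_n\|_{L^m}$ uniformly; together with Poincar\'e (for $u_n\in W^{1,p_n}_0$) and the normalization (to fix the mean of $v_n$), this yields uniform $W^{1,m}$ bounds for every $m$. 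Since $p_n\ge N$ or $q_n>N$ for $n$ large, Morrey's embedding gives uniform H\"older bounds, and Arzel\`a--Ascoli produces a subsequence along which $u_n\to u_\infty$ and $v_n\to v_\infty$ uniformly on $\overline\Omega$, with $u_\infty\in W^{1,\infty}_0(\Omega)$, $v_\infty\in W^{1,\infty}(\Omega)$.

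To identify the limit of $(\lambda_{p,q}^{\alpha,\beta})^{1/p}$ I would prove two inequalities. For the upper bound, given a competitor $(w,z)\in\mathcal{A}_\infty$, I use $(w,z_t)$ with $z_t\coloneqq z_+-t z_-$ as a test pair in the finite problem, choosing $t=t_{p,q}$ so that $\int_\Omega|w|^\alpha|z_t|^{\beta-2}z_t\,dx=0$; explicitly $t^{\beta-1}=\bigl(\int|w|^\alpha z_+^{\beta-1}\bigr)/\bigl(\int|w|^\alpha z_-^{\beta-1}\bigr)$, and the max-balance built into $\mathcal{A}_\infty$ forces $t_{p,q}\to1$, since each factor raised to the power $1/(\beta-1)$ is an $L^{\beta-1}$-type norm converging to the common $L^\infty$ value. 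Taking $p$-th roots one checks $\bigl(\int|\nabla w|^p/p+\int|\nabla z_t|^q/q\bigr)^{1/p}\to\max\{\|\nabla w\|_\infty,\|\nabla z\|_\infty^Q\}$ and $\bigl(\int|w|^\alpha|z_t|^\beta\bigr)^{1/p}\to\||w|^\Gamma|z|^{(1-\Gamma)Q}\|_\infty$, whence $\limsup(\lambda_{p,q})^{1/p}\le\Lambda_\infty$. For the lower bound I pass to the limit along the convergent subsequence: from the normalization one gets $\||u_\infty|^\Gamma|v_\infty|^{(1-\Gamma)Q}\|_\infty=1$ (so $u_\infty v_\infty\not\equiv0$), and rewriting the constraint as $\int|u_n|^{\alpha_n}|v_{n,+}|^{\beta_n-1}=\int|u_n|^{\alpha_n}|v_{n,-}|^{\beta_n-1}$ and taking $(\beta_n-1)$-th roots shows $(u_\infty,v_\infty)\in\mathcal{A}_\infty$. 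Weak lower semicontinuity in $W^{1,m}$ followed by $m\to\infty$ gives $\|\nabla u_\infty\|_\infty\le\liminf(\lambda_n)^{1/p_n}$ and $\|\nabla v_\infty\|_\infty^Q\le\liminf(\lambda_n)^{1/p_n}$; as the denominator equals $1$, the definition of $\Lambda_\infty$ yields $\Lambda_\infty\le\liminf(\lambda_n)^{1/p_n}$. Since the upper bound holds for the whole family and every subsequence admits a convergent sub-subsequence, the full limit exists and equals $\Lambda_\infty$, and $(u_\infty,v_\infty)$ is a minimizer.

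It remains to show $(u_\infty,v_\infty)$ solves \eqref{eq.u}--\eqref{eq.v} in the viscosity sense. Writing the $p_n$-Laplacian in non-divergence form, the equation for $u_n$ reads $-(p_n-2)|\nabla u_n|^{p_n-4}\langle D^2u_n\,\nabla u_n,\nabla u_n\rangle-|\nabla u_n|^{p_n-2}\Delta u_n=\lambda_n\alpha_n|u_n|^{\alpha_n-2}u_n|v_n|^{\beta_n}$. Testing with a smooth $\phi$ touching $u_\infty$ from one side at an interior $x_0$ with $\nabla\phi(x_0)\ne0$, the touching persists for $u_n$ at nearby points; dividing by $(p_n-2)|\nabla\phi|^{p_n-4}$ and taking $p_n$-th roots of the zeroth-order terms (using $(\lambda_n)^{1/p_n}\to\Lambda_\infty$ and $(|u_n|^{\alpha_n}|v_n|^{\beta_n})^{1/p_n}\to|u_\infty|^\Gamma|v_\infty|^{(1-\Gamma)Q}$) yields in the limit the inequalities encoded by $\min\{-\langle D^2u\,Du,Du\rangle,\ |Du|-\Lambda_\infty u^\Gamma|v_\infty|^{(1-\Gamma)Q}\}=0$. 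The analysis for $v_n$ is identical after normalizing by $1/q_n$, producing the three-region equation \eqref{eq.v}, in which the $\max$ in $\{v<0\}$ and the pure $\infty$-Laplacian in $\{v=0\}$ arise, as usual for sign-changing eigenfunctions, from the sign of $|v|^{\beta-2}v$. The Dirichlet condition is inherited from $u_n\in W^{1,p_n}_0$ and uniform convergence, while the Neumann condition $\partial v/\partial\nu=0$ is passed to the limit in the boundary-viscosity sense by testing at boundary points and using the weak natural boundary condition satisfied by each $v_n$.

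I expect the viscosity identification to be the principal difficulty. The delicate points are the correct normalization of the fully coupled, sign-changing equations so that all zeroth-order terms survive the limit with the right exponents $\Gamma$ and $(1-\Gamma)Q$, and --- more seriously --- the rigorous treatment of the Neumann condition for $v_\infty$ near $\partial\Omega$, where the interior $q$-Laplacian expansion must be combined with the boundary term and the competition between the normal derivative and the $\infty$-Laplacian must be controlled. The constraint-matching in the upper bound (the choice $t_{p,q}\to1$) is technically fussy but conceptually routine.
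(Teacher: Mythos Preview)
Your proposal is correct and follows essentially the same route as the paper: uniform gradient bounds from the Rayleigh quotient, compactness via Morrey/Arzel\`a--Ascoli, lower bound by lower semicontinuity on the extracted limit, upper bound by testing with Lipschitz competitors, and finally the standard viscosity-limit arguments for each component (the paper carries these out in Lemma~4.1 and Section~6).

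The one genuine technical difference is in the upper-bound construction. The paper, given arbitrary Lipschitz $w$ and $z$, enforces the finite-level constraint by a \emph{shift}, choosing $K=K(p,q)$ with $\int_\Omega |w|^{\alpha}|z-K|^{\beta-2}(z-K)\,dx=0$ and extracting a subsequence $K(p_n,q_n)\to k$; your proposal instead \emph{scales} the negative part, setting $z_t=z_+-t z_-$ and reading off $t^{\beta-1}$ explicitly. Your version has the advantage that one can see directly why $t_{p,q}\to 1$ from the $\mathcal{A}_\infty$ balance condition, whereas in the paper's approach one must still argue that the limit pair $(w,z-k)$ sweeps out all of $\mathcal{A}_\infty$. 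Both work.

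One small imprecision: you write ``the normalization (to fix the mean of $v_n$)'' to control $\|v_n\|$. The normalization does not fix the mean; what actually gives the Poincar\'e-type bound on $v_n$ is that the orthogonality constraint $\int_\Omega |u_n|^{\alpha}|v_n|^{\beta-2}v_n\,dx=0$ forces $v_n$ to change sign (hence to vanish somewhere once $q_n>N$), so a Poincar\'e inequality for functions with an interior zero applies. The paper relies on exactly this mechanism (its Lemmas~2.2--2.4).
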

	
	In the case that $\Omega$ is a ball of radius $R$ (that is, $\Omega = B_R$),
	or when $\Omega$ is a rectangle (that is, 
	$\Omega=(-R,R)\times (-L,L) \subset \R^2$, we assume here that $L\leq R$), 
	we can obtain an explicit value for this limit value, $\Lambda_\infty(\Gamma,Q)$.

	\begin{teo} \label{valor.bola} \
		\begin{enumerate}
			\item When $\Omega$ is a ball of radius $R$ we have
			$$
				\Lambda_\infty(\Gamma,Q) =
				\left(\dfrac{\Gamma+Q(1-\Gamma)}{\Gamma R}
					\right)^{\Gamma} 
				\left( \frac{\Gamma+Q(1-\Gamma)}{Q(1-\Gamma)R} 
				\right)^{(1-\Gamma)Q}.
			$$
			\item When $\Omega$ is the rectangle $(-R,R)\times (-L,L)$
			we get
			$$
				\Lambda_\infty(\Gamma,Q)  = 
				\begin{cases}
					 \left(\dfrac{\Gamma+Q(1-\Gamma)}{\Gamma R}
						\right)^{\Gamma} 
						\left( \dfrac{\Gamma+Q(1-\Gamma)}{Q(1-\Gamma)R} 
						\right)^{(1-\Gamma)Q}&  \mbox{ if } 
						\dfrac{\Gamma R}{Q(1-\Gamma)} \leq L, \\[10pt]
					 \dfrac{1}{(R-L)^{\Gamma}  L^{1-\Gamma}  } ,
						 & \mbox{ if } 
						\dfrac{\Gamma R}{Q(1-\Gamma)} > L.
				\end{cases}
			$$
		\end{enumerate}
	\end{teo}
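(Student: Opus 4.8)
The plan is to reduce the minimization defining $\Lambda_\infty(\Gamma,Q)$ in Theorem~\ref{teo:autovalor.limite.intro} to a purely geometric optimization and then solve it explicitly using the distance function to $\partial\Omega$. The first observation is that the Rayleigh quotient defining $\Lambda_\infty(\Gamma,Q)$ is invariant under the independent scalings $w\mapsto \mu w$ and $z\mapsto \nu z$ with $\mu,\nu>0$, and $\mathcal{A}_\infty$ is preserved by these scalings (both sides of the balance condition $\max |w|^\Gamma|z_+|^{(1-\Gamma)Q}=\max|w|^\Gamma|z_-|^{(1-\Gamma)Q}$ scale identically). Minimizing over $\mu,\nu$ and balancing the two entries of the max, I would show that
\[
\Lambda_\infty(\Gamma,Q)=\inf\left\{\frac{\|\nabla w\|_{L^\infty(\Omega)}^{\Gamma}\,\|\nabla z\|_{L^\infty(\Omega)}^{(1-\Gamma)Q}}{\max_{x}|w(x)|^{\Gamma}|z(x)|^{(1-\Gamma)Q}}\colon (w,z)\in\mathcal{A}_\infty\right\}=\frac{1}{M^\ast},
\]
where $M^\ast=\sup\{\max_x|w|^\Gamma|z|^{(1-\Gamma)Q}\}$ is taken over pairs with $\|\nabla w\|_{L^\infty}\le1$, $\|\nabla z\|_{L^\infty}\le 1$ and the balance condition. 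Since $w=0$ on $\partial\Omega$ and $|\nabla w|\le1$ force $|w(x)|\le d(x):=\operatorname{dist}(x,\partial\Omega)$, the pointwise-optimal choice is $w=d$, so the problem becomes $M^\ast=\sup_z\max_x d(x)^\Gamma|z(x)|^{(1-\Gamma)Q}$ over $1$-Lipschitz $z$ satisfying the balance constraint. The whole of Theorem~\ref{valor.bola} then amounts to computing $M^\ast$ for the ball (inradius $R$) and the rectangle (inradius $L$).

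For the upper bounds I would exhibit explicit extremals. For the ball $B_R$ take $w(x)=R-|x|=d(x)$ and $z(x)=x_1$; both are $1$-Lipschitz, $w\in W^{1,\infty}_0$, and the symmetry $x_1\mapsto-x_1$ makes the balance condition hold automatically. Restricting the product $d(x)^\Gamma|x_1|^{(1-\Gamma)Q}$ to the axis reduces everything to maximizing the one-variable function $t\mapsto (R-t)^\Gamma t^{(1-\Gamma)Q}$ on $[0,R]$, whose maximizer $t^\ast=\tfrac{(1-\Gamma)QR}{\Gamma+(1-\Gamma)Q}$ is found by elementary calculus and yields exactly the value in part (i). For the rectangle I would use $w=d=\min(R-|x_1|,\,L-|x_2|)$ together with a linear $z$ taken in the appropriate coordinate direction, and optimize the corresponding one-variable profile.

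For the lower bound I would let $x^{+}$ be a point where $d^\Gamma|z|^{(1-\Gamma)Q}$ attains its maximum $M$, with $z(x^{+})>0$ after relabeling; the balance condition then supplies a second point $x^{-}$ with $z(x^{-})<0$ realizing the same value $M$. Combining $|w(x^{\pm})|\le d(x^{\pm})$ with the Lipschitz bound $z(x^{+})-z(x^{-})\le |x^{+}-x^{-}|$, the quantity $M$ is controlled by a finite-dimensional optimization over the locations $x^{\pm}$. A separability/symmetry argument (the relevant functional splits as a sum over the two ``arms'') shows the extremal configuration is the symmetric one with $x^{\pm}$ antipodal along a diameter for the ball, and on the long axis for the rectangle, so the finite-dimensional problem collapses to the same one-variable maximization used for the upper bound, giving a matching bound and hence the claimed equality.

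The rectangle is where the two regimes appear, and this is the step I expect to be the main obstacle. Here $d(x)=\min(R-|x_1|,L-|x_2|)$ is piecewise, saturating at the inradius $L$ on the core $\{|x_1|\le R-L\}$ and decaying like $R-|x_1|$ on the two caps near the short sides. The lower-bound optimization over the balance points must be performed with this piecewise distance, and whether the optimal separation places $x^{\pm}$ inside the core (where $d=L$) or inside the caps (where $d=R-|x_1|$) produces the dichotomy in part (ii); the transition occurs exactly when the free critical point $t^\ast$ crosses the edge $|x_1|=R-L$ of the core. The delicate points are to rule out off-axis and asymmetric competitors in the presence of the corners and of the two competing coordinate directions, and to verify that the candidate $1$-Lipschitz $z$ (e.g.\ a McShane extension of its prescribed values at $x^{+},x^{-}$) can be chosen so that the balance condition holds without creating a larger value of the product $d^\Gamma|z|^{(1-\Gamma)Q}$ anywhere else in $\Omega$. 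Once these geometric facts are secured, matching the explicit test functions to the lower bound in each regime finishes the proof of Theorem~\ref{valor.bola}.
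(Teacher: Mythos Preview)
Your approach is in the same spirit as the paper's: both take $w$ to be the distance function (or a cone) and $z$ to be a linear function, then reduce to the one-variable maximization of $(R-t)^\Gamma t^{(1-\Gamma)Q}$. The paper's argument is considerably more informal than yours: it justifies optimality only ``by symmetry reasons'' and by asserting that the cone is ``the best choice that we can make,'' with no explicit lower-bound argument. Your scaling reduction to $\Lambda_\infty=1/M^\ast$ and the two-sided estimate via the balance points $x^{\pm}$ make rigorous precisely what the paper leaves heuristic. One minor difference is that for the rectangle the paper uses a truncated cone $k_1(\rho-|(x,y)-(a,0)|)_+$ centered at the candidate maximizer rather than the full distance function $\min(R-|x_1|,L-|x_2|)$, but the two coincide at the optimizing point so the resulting value is the same. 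The obstacle you correctly flag---ruling out off-axis and asymmetric competitors for the rectangle and checking that the balance condition can be met without inflating the product elsewhere---is exactly the step the paper does not address.
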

	
	Remark that the value $\Lambda_\infty(\Gamma,Q)$ for the ball 
	coincides with the one for the rectangle
	(and does not depends on $L$) when $L$ is close to $R$; while for $L$ small the 
	two values differ (and the latter depends on $L$ and goes to $\infty$ as $L\to 0$).

	Note that for the ball, $\Omega =B_R(0)$, when $q=\alpha =p$ 
	(hence $\beta=0$) we have that $p\lambda_{p,p}^{p,0}$ (given by 
	\eqref{eq:autovalor}) is the first eigenvalue 
	for the Dirichlet $p-$Laplacian and for this eigenvalue, it is proved in 
	\cite{JLM} that $\left(p\lambda_{p,p}^{p,0}\right)^{\nicefrac1p}\to
	\nicefrac{1}{R}$ as $p\to \infty $, one over the radius of the largest 
	ball included in $\Omega$. This value corresponds to the value of 
	$\Lambda_\infty(\Gamma,Q) $ computed in Theorem \ref{valor.bola} 
	since in this case $\Gamma =1$ and $Q=1$. Therefore, we can recover 
	the well known result for a single equation with Dirichlet boundary 
	conditions from our results.
	For the Neumann case we have to consider $q=\beta =p$ 
	(and hence $\alpha=0$). Now we have that $p\lambda_{p,p}^{0,p}$ is the first
	non trivial eigenvalue for the Neumann $p-$Laplacian and 
	for this eigenvalue, it is proved in \cite{Espo,RoSaint} that 
	$\left(p\lambda_{p,p}^{0,p}\right)^{\nicefrac1p}\to\nicefrac1{R}$ as $p\to\infty$, 
	that is $2$ over the diameter of $\Omega$.
	In this case in Theorem \ref{valor.bola} we have to take
	$\Gamma =0$ and $Q=1$ that gives again 
	$\Lambda_\infty (0,1) =\nicefrac1R$. Hence, we recover again the known 
	result for a single equation with Neumann boundary conditions.
	Remark that similar limits cases also hold for the case of the rectangle.

	These limit behaviours hold in general. Note that if we take $Q=1$ in 
	the minimization problem for $\Lambda_\infty(\Gamma,Q) $ and 
	then $\Gamma \to 1$ we get
	\[
		\Lambda_\infty(\Gamma,1) \to
		\inf \left\{ \frac{\displaystyle 
		\max \left\{\| \nabla w \|_{L^\infty (\Omega)};   
		\| \nabla z \|_{L^\infty (\Omega)}
		\right\} }{
		\displaystyle \| w  \|_{L^\infty (\Omega)} }\colon
		(w,z)\in\mathscr{B}
		\right\}.
	\]
	where $\mathscr{B}\coloneqq \{(w,z)\in W^{1,\infty}_0
		(\Omega)\times W^{1,\infty} (\Omega)\colon wz\neq0 \}$
 	This limit value coincides with the first eigenvalue for the Dirichlet 
 	problem for the scalar infinity Laplacian 
 	(just take $z\equiv 1$ and $w$ a first eigenfunction for the Dirichlet 
 	problem), see \cite{JLM}.
 	On the other hand when we let $\Gamma \to 0$ (keeping $Q=1$) we obtain
 	$$
		\Lambda_\infty(\Gamma,1)\to
		\inf \left\{ \frac{\displaystyle \max \left
		\{ \| \nabla w \|_{L^\infty (\Omega)}; \,  
		\| \nabla z \|_{L^\infty (\Omega)}\right\} }{
		\displaystyle \| z  \|_{L^\infty (\Omega)} }
		\colon (w,z)\in\mathcal{B}\right\}
 	$$
	where 
	$\mathcal{B}\coloneqq\{(w,z)\in W^{1,\infty}_0(\Omega)\times 
	W^{1,\infty} (\Omega)\colon wz\not\equiv0 \mbox{ and } 
	\max |z_+|  =  \max |z_-|\}$. 
	Hence in this case we obtain the first nontrivial eigenvalue for the Neumann 
	infinity Laplacian (in this case just take $w (x) = \epsilon \, \mbox{dist}(x,\partial \Omega)$ and $z$ a first 
	non trivial eigenfunction for the Neumann problem and then send $\epsilon$ to zero), 
	see \cite{Espo,RoSaint}. We conclude that our eigenvalue limit problem is 
	somehow in between the Dirichlet and the Neumann cases.
	
	Let us end the introduction giving some references and motivation for the 
	analysis of this problem. Concerning the $p-$Laplacian and its properties we quote \cite{dB,Kru,L,Perera,JLV} and references therein.
	The limit of $p-$harmonic functions, that is, 
	of solutions to $-\Delta_p u =-\mbox{div} (|\nabla u|^{p-2} \nabla u)= 0$, 
	as $p\to\infty$ has been extensively studied in the literature 
	(see \cite{BBM} and the survey \cite{ACJ}) and leads naturally to 
	solutions of the infinity Laplacian, given by 
	$-\Delta_{\infty} u = - \nabla u D^2 u (\nabla u)^t=0$. 
	Infinity harmonic functions (solutions to $-\Delta_\infty u =0$) are
	related to the optimal Lipschitz extension problem (see the survey
	\cite{ACJ}) and find applications in optimal transportation, image
	processing and tug-of-war games (see, e.g.,
	\cite{CMS,GAMPR,PSSW,PSSW2} and the references therein).
	Also limits of the eigenvalue problem related to the $p$-Laplacian
	have been exhaustively examined (see \cite{GMPR,JLM,JL}),
	and lead naturally to the infinity Laplacian eigenvalue problem
	$
		\min \left\{ |\nabla u| (x) - \Lambda_\infty u(x)   ,\ 
		- \Delta_{\infty} u (x)
		\right\}=0.
	$
	In fact, it is proved in \cite{JLM,JL}  that the limit as 
	$p\to \infty$ exists both for the eigenfunctions, 
	$u_{p}\to u_\infty$ uniformly, and for 
	the eigenvalues $(\lambda_p)^{\nicefrac1p} 
	\to \Lambda_\infty =\nicefrac1R$, 
	where the pair $u_\infty$, $\Lambda_\infty$ is a
	non trivial solution to the infinity Laplacian eigenvalue problem.
	
	More recently, the limit problem for the fractional $p-$Laplacian has been 
	studied in \cite{CLM, DRSS, DS, LL}.
	
	Eigenvalues for the $p-$Laplacian are related to the asymptotic behaviour of solutions to
	the corresponding evolutions equations, see, for example, \cite{Bonforte,JL1,JL2}.

	Concerning eigenvalues for systems of $p-$Laplacian type there is a rich 
	recent literature, we refer to \cite{BdF,BoRoSa,MaMa,dNP,Z} and references therein.
	The first case in which there is an study of the limit as $p\to \infty$ of 
	eigenvalues for systems of $p-$Laplacians is \cite{BoRoSa} where both 
	equations are subject to Dirichlet boundary conditions.
	
	\medskip
	
	The paper is organized as follows: in Section \ref{Prel} we collect some 
	preliminary results; in Section \ref{1erAutov} we deal with the first 
	eigenvalue to our problem for fixed exponents (in this section we prove 
	Theorem \ref{teo:autovalor}); in Section \ref{sect-lim} we deal with the 
	limit as $p,q\to \infty$ in a variational setting (showing the first part of 
	Theorem \ref{teo:autovalor.limite.intro}); in Section \ref{sect-ex} we 
	compute explicitly the limit eigenvalue in the case of a ball and a rectangle 
	(see Theorem \ref{valor.bola}); finally, in Section 
	\ref{sect-viscosity} we pass the the limit in the equations 
	in the viscosity sense (finishing the proof of Theorem 
	\ref{teo:autovalor.limite.intro}).

%%%%%%%%%%%%%%%%%%%%%%%%%%%%%%%%%%%%%%%%%%%%%%%%%%%%%%%%%%%%%%%%%%%%%%%%%%%%%%%%%%
\section{Preliminaries}\label{Prel}
%%%%%%%%%%%%%%%%%%%%%%%%%%%%%%%%%%%%%%%%%%%%%%%%%%%%%%%%%%%%%%%%%%%%%%%%%%%%%%%%%%
	We begin with some basic facts that will be needed in 
	subsequent sections. 

	\begin{lema}\label{lema:AuxPoincare1}
		Let $\beta>1,$  $p\ge N,$
		and fix $u\in W^{1,p}_0(\Omega)$ such that 
		$u\not\equiv 0.$
		Then
		\[
			\mathcal{A}_{p,q}^{\alpha,\beta}(u)\coloneqq\left\{v \in W^{1,q}(\Omega)
			\colon \displaystyle\int_\Omega
			|u|^{\alpha}|v|^{\beta-2}v\, dx=0	 \right\}
		\]
		is a closed set in $W^{1,q}(\Omega).$
	\end{lema}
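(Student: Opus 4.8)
The plan is to exhibit $\mathcal{A}_{p,q}^{\alpha,\beta}(u)$ as the zero level set of a continuous functional. Define $\Phi\colon W^{1,q}(\Omega)\to\mathbb{R}$ by $\Phi(v)=\int_\Omega |u|^\alpha |v|^{\beta-2}v\,dx$. Since $\beta>1$, the map $t\mapsto |t|^{\beta-2}t$ is continuous (with value $0$ at $0$), so $\Phi(v)$ makes sense for measurable $v$ once integrability is checked, and $\mathcal{A}_{p,q}^{\alpha,\beta}(u)=\Phi^{-1}(\{0\})$. Thus it suffices to prove that $\Phi$ is (sequentially) continuous on $W^{1,q}(\Omega)$; closedness then follows because the preimage of the closed set $\{0\}$ under a continuous map is closed.

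First I would fix the integrability of the weight. Because $p\ge N$, the Sobolev embedding gives $u\in L^r(\Omega)$ for every finite $r$ (indeed $u\in L^\infty$ if $p>N$, and $u\in L^r$ for all $r<\infty$ if $p=N$); this is the only place where the hypothesis $p\ge N$ enters. From $\frac{\alpha}{p}+\frac{\beta}{q}=1$ with $\alpha>0$ and $p<\infty$ we get $\beta<q$, so the conjugate exponents $s'=\frac{q}{\beta-1}>1$ and $s=\frac{q}{q-\beta+1}>1$ are admissible. Taking $r=\alpha s<\infty$ we obtain $|u|^\alpha\in L^s(\Omega)$, and Hölder with exponents $s$ and $s'$ shows that $\Phi(v)$ is finite for every $v\in W^{1,q}(\Omega)\subset L^q(\Omega)$.

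For continuity I would argue by sequences together with dominated convergence. Let $v_n\to v$ in $W^{1,q}(\Omega)$; then $v_n\to v$ in $L^q(\Omega)$, and a standard consequence of the completeness of $L^q$ provides a subsequence $v_{n_k}\to v$ almost everywhere, dominated by a fixed $g\in L^q(\Omega)$. Continuity of $t\mapsto |t|^{\beta-2}t$ gives $|u|^\alpha|v_{n_k}|^{\beta-2}v_{n_k}\to |u|^\alpha|v|^{\beta-2}v$ almost everywhere, while $\big||u|^\alpha|v_{n_k}|^{\beta-2}v_{n_k}\big|\le |u|^\alpha g^{\beta-1}$, the latter being integrable since $\int_\Omega |u|^\alpha g^{\beta-1}\,dx\le \||u|^\alpha\|_{L^s}\,\|g\|_{L^q}^{\beta-1}<\infty$ by the Hölder estimate above. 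Dominated convergence then yields $\Phi(v_{n_k})\to\Phi(v)$, and for closedness this is already enough: if each $v_n\in\mathcal{A}_{p,q}^{\alpha,\beta}(u)$, then $\Phi(v_{n_k})=0$ for all $k$, whence $\Phi(v)=0$ and $v\in\mathcal{A}_{p,q}^{\alpha,\beta}(u)$.

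I expect the only real subtlety to be the integrability of the dominating function $|u|^\alpha g^{\beta-1}$, i.e. placing $|u|^\alpha$ and $g^{\beta-1}$ in dual Lebesgue spaces; this is exactly what forces $u$ into the high exponent $L^{\alpha s}$ and is the reason $p\ge N$ cannot be dropped (under the companion hypothesis $q>N$ one would instead put the good integrability on $v$, via $W^{1,q}(\Omega)\hookrightarrow C(\overline{\Omega})$, so that $v_n\to v$ uniformly and only $u\in L^\alpha$—which holds because $\alpha<p$—is needed). Everything else is routine, so the scheme reduces, as above, to reading off the constraint from $\Phi^{-1}(\{0\})$ once the weight $|u|^\alpha$ is known to lie in $L^s(\Omega)$.
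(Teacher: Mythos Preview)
Your argument is correct and follows essentially the same route as the paper: both place $|u|^\alpha$ in $L^{q/(q-\beta+1)}(\Omega)$ via the embedding coming from $p\ge N$, and pass to the limit in the constraint integral using the $L^q$-convergence of $v_n$ (the paper phrases this as strong convergence of $|v_n|^{\beta-2}v_n$ in $L^{q/(\beta-1)}$, you unpack the same fact via dominated convergence with an $L^q$ majorant). Your framing as $\Phi^{-1}(\{0\})$ for a sequentially continuous $\Phi$, and your aside on the companion case $q>N$, are nice touches but do not change the substance.
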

	
	\begin{proof}
		Let $\{v_n\}_{n\in\mathbb{N}}\subset \mathcal{A}_{p,q}^{\alpha,\beta}(u)$ and 
		$v\in W^{1,q}(\Omega)$
		such that
		$v_n\to v$ strongly in $ W^{1,q}(\Omega)$.
		Then, up to a subsequence, $|v_n|^{\beta-2}v_n\to|v|^{\beta-2}v$
		strongly in $L^{\frac{q}{\beta-1}}(\Omega).$
		Since $p\ge N,$ by the Sobolev embedding theorem, we have that
		$|u|^{\alpha}\in L^{\frac{q}{q-\beta+1}}(\Omega).$ Therefore
		\begin{equation}\label{eq:auxp2}
			0=\lim_{n\to\infty}
			\int_{\Omega}|u|^{\alpha}|v_n|^{\beta-2}v_n\, dx
			=\int_{\Omega}|u|^{\alpha}|v|^{\beta-2}v\, dx,
		\end{equation}
		and hence $v\in\mathcal{A}_{p,q}^{\alpha,\beta}(u).$
	\end{proof}
	
	\begin{lema}\label{lema:Poincare1}
		Let  $\beta>1,$  $p\ge N$
		and fix $u\in W^{1,p}_0(\Omega)$ such that 
		$u\not\equiv 0.$
		Then 
		there is a positive constant $C$ such that
		\begin{equation}\label{eq:Poincare1}
			\|v\|_{L^q(\Omega)}\le C\|\nabla v\|_{L^q(\Omega)}
		\end{equation}
		for all $v\in\mathcal{A}_{p,q}^{\alpha,\beta}(u).$
	\end{lema}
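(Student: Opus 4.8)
The plan is to argue by contradiction, combining the compactness of the Sobolev embedding with the closedness of the constraint set established in Lemma~\ref{lema:AuxPoincare1}. Assuming that \eqref{eq:Poincare1} fails for every positive constant, I would extract a sequence $\{v_n\}_{n\in\mathbb{N}}\subset\mathcal{A}_{p,q}^{\alpha,\beta}(u)$ with $\|v_n\|_{L^q(\Omega)}>n\,\|\nabla v_n\|_{L^q(\Omega)}$. Since the defining constraint $\int_\Omega|u|^{\alpha}|v|^{\beta-2}v\,dx=0$ is positively homogeneous in $v$ (scaling $v$ by a positive constant $c$ multiplies the integral by $c^{\beta-1}$), I may normalize and set $w_n\coloneqq v_n/\|v_n\|_{L^q(\Omega)}$, which still lies in $\mathcal{A}_{p,q}^{\alpha,\beta}(u)$ and satisfies $\|w_n\|_{L^q(\Omega)}=1$ together with $\|\nabla w_n\|_{L^q(\Omega)}<1/n\to0$.

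Next I would exploit that $\{w_n\}$ is bounded in $W^{1,q}(\Omega)$. By reflexivity and the Rellich--Kondrachov theorem, after passing to a subsequence (not relabelled) I obtain $w_n\rightharpoonup w$ weakly in $W^{1,q}(\Omega)$ and $w_n\to w$ strongly in $L^q(\Omega)$, for some $w\in W^{1,q}(\Omega)$. Because $\|\nabla w_n\|_{L^q(\Omega)}\to0$, the gradients converge strongly to zero, so $\nabla w=0$ almost everywhere and in fact $w_n\to w$ strongly in $W^{1,q}(\Omega)$. As $\Omega$ is connected, $w$ must equal a constant $c$, and the strong $L^q$ convergence forces $\|w\|_{L^q(\Omega)}=1$, whence $c\neq0$.

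Finally I would derive a contradiction with the constraint. Since $w_n\to w$ strongly in $W^{1,q}(\Omega)$ with each $w_n\in\mathcal{A}_{p,q}^{\alpha,\beta}(u)$, the closedness proved in Lemma~\ref{lema:AuxPoincare1} yields $w\in\mathcal{A}_{p,q}^{\alpha,\beta}(u)$, that is, $\int_\Omega|u|^{\alpha}|w|^{\beta-2}w\,dx=0$. But with $w\equiv c\neq0$ this integral equals $|c|^{\beta-2}c\int_\Omega|u|^{\alpha}\,dx$, which is nonzero because $u\not\equiv0$ and $\alpha>0$ give $\int_\Omega|u|^{\alpha}\,dx>0$ while $|c|^{\beta-2}c\neq0$. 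This contradiction establishes the existence of the desired constant $C$.

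I expect the only delicate point to be passing to the limit in the nonlinear constraint, namely verifying that the $W^{1,q}$ limit of the normalized sequence still annihilates the functional $v\mapsto\int_\Omega|u|^{\alpha}|v|^{\beta-2}v\,dx$. This is exactly what Lemma~\ref{lema:AuxPoincare1} supplies, and it is the step that uses the hypotheses $\beta>1$ and $p\ge N$ (the latter ensuring $|u|^{\alpha}\in L^{q/(q-\beta+1)}(\Omega)$, so that the constraint functional is continuous along strongly convergent sequences). Everything else---the homogeneity-based normalization, the compactness argument, and the identification of the limit as a nonzero constant---is routine.
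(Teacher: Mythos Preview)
Your proposal is correct and follows essentially the same argument as the paper: both proceed by contradiction, normalize to $\|v_n\|_{L^q(\Omega)}=1$ with $\|\nabla v_n\|_{L^q(\Omega)}\to 0$, extract a subsequence converging weakly in $W^{1,q}(\Omega)$ and strongly in $L^q(\Omega)$, upgrade to strong $W^{1,q}$ convergence, identify the limit as a nonzero constant, and then invoke Lemma~\ref{lema:AuxPoincare1} to reach a contradiction. Your write-up is in fact slightly more explicit than the paper's in spelling out why a nonzero constant cannot lie in $\mathcal{A}_{p,q}^{\alpha,\beta}(u)$.
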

	
	\begin{proof}
		We argue by contradiction. Suppose that for all $n\in\mathbb{N}$ there
		exists $v_n\in\mathcal{A}_{p,q}^{\alpha,\beta}(u)$ such that $\|v_n\|_{L^q(\Omega)}=1$ and
		\begin{equation}\label{eq:poincare1}
			\|\nabla v_n\|_{L^q(\Omega)}\le\frac1n.
		\end{equation}
		Then $\{v_n\}_{n\in\mathbb{N}}$ is bounded in $W^{1,q}(\Omega).$ Thus,
		using the Sobolev embedding theorem, we have that
		there exist a subsequence, still denoted by $\{v_n\}_{n\in\mathbb{N}},$
		and
		$v\in W^{1,q}(\Omega)$  such that
		\begin{equation}\label{eq:poincare2}
			\begin{aligned}
				&v_n\rightharpoonup v \text{ weakly in }W^{1,q}(\Omega),\\
				&v_n\to v \text{ strongly in }
				L^{q}(\Omega).
			\end{aligned}
		\end{equation}
		Thus $\|v\|_{L^q(\Omega)}=1,$ and  by \eqref{eq:poincare1}, we get
		\[
			\|\nabla v\|_{L^q(\Omega)}\le \liminf_{n\to\infty}
			\|\nabla v_n \|_{L^q(\Omega)}
			\le \lim_{n\to\infty} \dfrac{1}{n}=0.
		\]
		Then $\nabla v\equiv 0$ and hence $v$ is constant since $\Omega$ is
		connected.
		Moreover, since  $v_n\rightharpoonup v $ weakly in $W^{1,q}(\Omega)$
		and
		$\|v_n\|_{W^{1,q}(\Omega)} \to \|v\|_{W^{1,q}(\Omega)},$ we have that
		$v_n\to v$
		strongly in $W^{1,q}(\Omega).$ By Lemma \ref{lema:AuxPoincare1}, we have
		that
		$v\in\mathcal{A}_{p,q}^{\alpha,\beta}(u).$ This is a contradiction because $v$ is a constant.
	\end{proof}
	
	Note that the best constant $C$ for the validity of \eqref{eq:Poincare1} is
	\[
			\dfrac{1}{C(u)}=\min\left\{
			\dfrac{\|\nabla v\|_{L^q(\Omega)}}{\|v\|_{L^q(\Omega)}}
			\colon v\in\mathcal{A}_{p,q}^{\alpha,\beta}(u)\setminus\{0\}\right\}.
	\]

	\begin{lema}\label{lema:ConsantePoincare}
		 Let $\beta> 1,$  $p\ge N$ 
		and $\{u_n\}_{n\in\mathbb{N}}$ a bounded
		sequence in $W^{1,p}_0(\Omega)$  such that $u_n\not\equiv0$
		for all $n\in\mathbb{N}$.
		If
		\[
			\limsup_{n\to\infty}C(u_n)= \infty
		\]
		 then, up to a subsequence, $u_n\rightharpoonup 0$ weakly in
		 $W^{1,p}(\Omega).$
	\end{lema}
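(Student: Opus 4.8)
The plan is to argue directly, extracting limits and showing that the weak limit of $\{u_n\}$ must vanish. First I would pass to a subsequence along which $C(u_n)\to\infty$, and—since $\{u_n\}$ is bounded in $W^{1,p}_0(\Omega)$—extract a further subsequence with $u_n\rightharpoonup u$ weakly in $W^{1,p}_0(\Omega)$. Because $p\ge N$, the Sobolev embedding is compact, so $u_n\to u$ strongly in every $L^s(\Omega)$ with $s<\infty$; in particular, by continuity of the Nemytskii operator, $|u_n|^{\alpha}\to|u|^{\alpha}$ strongly in $L^{q/(q-\beta+1)}(\Omega)$. The goal is then to prove $u\equiv 0$.

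Next, using the characterization of $C(u_n)$ as a minimum, for each $n$ I would pick $v_n\in\mathcal{A}_{p,q}^{\alpha,\beta}(u_n)$ realizing the best constant, normalized so that $\|v_n\|_{L^q(\Omega)}=1$; then $\|\nabla v_n\|_{L^q(\Omega)}=1/C(u_n)\to 0$ (if one prefers not to invoke attainment, near-minimizers with $\|\nabla v_n\|_{L^q(\Omega)}\le 2/C(u_n)$ serve equally well). The sequence $\{v_n\}$ is thus bounded in $W^{1,q}(\Omega)$, and exactly as in Lemma \ref{lema:Poincare1} a subsequence converges weakly in $W^{1,q}(\Omega)$ and strongly in $L^q(\Omega)$ to a limit $v$ with $\|v\|_{L^q(\Omega)}=1$; weak lower semicontinuity of the gradient norm forces $\nabla v\equiv 0$, so $v$ is a constant $c$ with $c\neq 0$.

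The crux is to pass to the limit in the constraint $\int_\Omega |u_n|^{\alpha}|v_n|^{\beta-2}v_n\,dx=0$. Here I would combine the strong convergence $|u_n|^{\alpha}\to|u|^{\alpha}$ in $L^{q/(q-\beta+1)}(\Omega)$ with the strong convergence $|v_n|^{\beta-2}v_n\to|c|^{\beta-2}c$ in $L^{q/(\beta-1)}(\Omega)$, the latter following from continuity of the Nemytskii operator $s\mapsto|s|^{\beta-2}s$, which maps $L^q(\Omega)$ into $L^{q/(\beta-1)}(\Omega)$ since $\beta>1$. As the exponents $q/(q-\beta+1)$ and $q/(\beta-1)$ are conjugate, the product converges and the integral tends to $|c|^{\beta-2}c\int_\Omega |u|^{\alpha}\,dx$, which must equal $0$. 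Because $c\neq 0$, this yields $\int_\Omega |u|^{\alpha}\,dx=0$, hence $u\equiv 0$, i.e. $u_n\rightharpoonup 0$.

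I expect the main obstacle to be precisely this last passage to the limit: one must upgrade the weak convergence of $u_n$ to strong convergence—this is exactly where the hypothesis $p\ge N$ is used, furnishing the needed compactness—and verify that the two nonlinear factors converge in mutually conjugate Lebesgue spaces so that their product integrates to the limit. The key structural point is that $v_n$ converges to a \emph{nonzero} constant $c$ rather than to $0$; it is this nonvanishing of $c$ that makes the constraint degenerate into the condition $\int_\Omega|u|^{\alpha}\,dx=0$ and thereby forces $u\equiv 0$.
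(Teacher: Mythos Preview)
Your proposal is correct and follows essentially the same approach as the paper: choose (near-)minimizers $v_n$ for $C(u_n)$, use their small gradients to force the limit $v$ to be a nonzero constant, use $p\ge N$ to get strong convergence of $|u_n|^{\alpha}$ in $L^{q/(q-\beta+1)}(\Omega)$, and pass to the limit in the constraint via the dual pairing with $|v_n|^{\beta-2}v_n\to|v|^{\beta-2}v$ in $L^{q/(\beta-1)}(\Omega)$ to conclude $\int_\Omega|u|^{\alpha}\,dx=0$. The only cosmetic difference is the order in which you extract the $u$- and $v$-subsequences, which is immaterial.
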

	
	\begin{proof}
		We first assume that $C(u_n)\to\infty.$ For all $n\in\mathbb{N},$
		there is
		$v_n\in\mathcal{A}_{p,q}^{\alpha,\beta}(u_n)$ such that $\|v_n\|_{L^q(\Omega)}=1$ and
		\begin{equation}\label{eq:cp1}
			\dfrac{1}{C(u_n)}=\|\nabla v_n\|_{L^q(\Omega)}.
		\end{equation}
		Then $\{v_n\}_{n\in\mathbb{N}}$ is bounded in $W^{1,q}(\Omega).$
		Therefore there
		exist a subsequence $\{v_{n_k}\}_{k\in\mathbb{N}},$ and
		$v\in W^{1,q}(\Omega)$ such that
		\begin{equation}\label{eq:cp2}
			\begin{aligned}
				&v_{n_k}\rightharpoonup v \text{ weakly in }W^{1,q}(\Omega),\\
				&v_{n_k}\to v \text{ strongly in } L^{q}(\Omega),\\
				&|v_{n_k}|^{\beta-2}v_{n_k}\to |v|^{\beta-2}v
				\text{ strongly in }
				L^{\frac{q}{\beta-1}}(\Omega).
			\end{aligned}
		\end{equation}
		Then $\|v\|_{L^q(\Omega)}=1$ and
		\[
			\|\nabla v\|_{L^q(\Omega)}
			\le\liminf_{k\to\infty}\|\nabla v_{n_k}\|_{L^q(\Omega)}
			=\lim_{k\to\infty}\dfrac{1}{C(u_{n_k})}=0.
		\]
		Therefore $v$ is a constant. Moreover,
		since $\|v\|_{L^q(\Omega)}=1,$ we have that
		$ v=\pm\nicefrac{1}{|\Omega|^{\nicefrac1q}}.$
		
		On the other hand, since $\{u_{n_k}\}_{k\in\mathbb{N}}$ is bounded in
		$W^{1,p}(\Omega)$ and $p\ge N,$ there
		exist a subsequence, still denoted $\{u_{n_k}\}_{k\in\mathbb{N}},$ and
		$u\in W^{1,p}(\Omega)$ such that
		\begin{equation}\label{eq:cp3}
			\begin{aligned}
				&u_{n_k}\rightharpoonup u \text{ weakly in }W^{1,p}(\Omega),\\
				&|u_{n_k}|^{\alpha}\to |u|^{\alpha} \text{ strongly in }
				L^{\frac{q}{q-\beta+1}}(\Omega).
			\end{aligned}
		\end{equation}
		
		Using \eqref{eq:cp2} and \eqref{eq:cp3}, we get
		\[
			0=\lim_{k\to\infty}\int_{\Omega}|u_{n_k}|^{\alpha}
			|v_{n_k}|^{\beta-2}v_{n_k}\, dx
			=\int_{\Omega}|u|^{\alpha}|v|^{\beta-2}v\, dx
			=\dfrac{\pm1}{|\Omega|^{\frac{\beta-1}{q\beta}}}
			\int_{\Omega}|u|^{\alpha}\, dx.
		\]
		Therefore $u\equiv0.$
	\end{proof}
	
	The proof of 
	the next lemma is classical and therefore omitted in this paper.
	
	\begin{lema}\label{lema:Poincare2}
		If  $q>N$ then there is a positive constant $C=C(q,N,\Omega)$ such that
		\[
			\|v\|_{L^q(\Omega)}\le C\|\nabla v\|_{L^p(\Omega)}
		\]
		for all $v\in\{w\in W^{1,p}(\Omega)\colon \exists x_0\in\Omega
		\text{ with }
		w(x_0)=0\}.$
	\end{lema}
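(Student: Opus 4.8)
The plan is to deduce the inequality directly from Morrey's embedding, which is exactly the ``classical'' fact underlying the statement. Since $q>N$, every $v\in W^{1,q}(\Omega)$ has a continuous representative on $\overline{\Omega}$, so the pointwise condition $v(x_0)=0$ is meaningful. Because $\Omega$ has smooth boundary it is a Sobolev extension domain, and Morrey's inequality then provides a constant $C_0=C_0(q,N,\Omega)$ with the oscillation estimate $|v(x)-v(y)|\le C_0\,\|\nabla v\|_{L^q(\Omega)}\,|x-y|^{1-N/q}$ for all $x,y\in\overline{\Omega}$.

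Applying this with $y=x_0$ and using $v(x_0)=0$ gives $|v(x)|\le C_0\,\operatorname{diam}(\Omega)^{1-N/q}\,\|\nabla v\|_{L^q(\Omega)}$ for every $x\in\overline{\Omega}$, hence $\|v\|_{L^\infty(\Omega)}\le C_1\|\nabla v\|_{L^q(\Omega)}$ with $C_1=C_0\operatorname{diam}(\Omega)^{1-N/q}$. Since $\Omega$ is bounded, $\|v\|_{L^q(\Omega)}\le|\Omega|^{1/q}\|v\|_{L^\infty(\Omega)}\le C_1|\Omega|^{1/q}\|\nabla v\|_{L^q(\Omega)}$, which is the asserted inequality with $C=C_1|\Omega|^{1/q}$. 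The gradient norm here is naturally measured in $L^q(\Omega)$, the norm dictated by the condition $q>N$.

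An alternative route, parallel to the proof of Lemma~\ref{lema:Poincare1}, is a contradiction/compactness argument. If the inequality failed, there would exist a sequence $v_n$ with $v_n(x_{0,n})=0$, $\|v_n\|_{L^q(\Omega)}=1$ and $\|\nabla v_n\|_{L^q(\Omega)}\le 1/n$; since $q>N$ the Morrey embedding $W^{1,q}(\Omega)\hookrightarrow C^0(\overline{\Omega})$ is compact, so along a subsequence $v_n\to v$ uniformly on $\overline{\Omega}$ and weakly in $W^{1,q}(\Omega)$, while $x_{0,n}\to x_0\in\overline{\Omega}$. Weak lower semicontinuity forces $\nabla v\equiv 0$, so $v$ is a nonzero constant (using connectedness of $\Omega$ together with $\|v\|_{L^q(\Omega)}=1$), while passing to the limit in $v_n(x_{0,n})=0$ yields $v(x_0)=0$, a contradiction. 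The only point needing care, and the main obstacle in this second approach, is that the vanishing point $x_{0,n}$ depends on $n$: one must extract a convergent subsequence of these points and combine the uniform convergence of $v_n$ with the continuity of the limit $v$ to transfer the zero to $v(x_0)$. This is precisely where the hypothesis $q>N$ enters, providing simultaneously the continuity needed to make sense of the constraint and the compactness into $C^0(\overline{\Omega})$.
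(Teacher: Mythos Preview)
Your proposal is correct; the paper gives no proof of this lemma, stating only that it is classical and omitting the argument, so there is nothing to compare against. Both routes you offer---the direct Morrey oscillation estimate and the compactness/contradiction argument---are standard and valid, and you correctly flag the evident typo in the statement (the exponent on the gradient norm and the Sobolev space should be $q$, not $p$, as is clear from how the lemma is invoked in the proof of Theorem~\ref{teo:autovalor}).
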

%%%%%%%%%%%%%%%%%%%%%%%%%%%%%%%%%%%%%%%%%%%%%%%%%%%%%%%%%%%%%%%%%%%%%%%%%%%%%%%%%%%%
\section{The first non trivial eigenvalue}\label{1erAutov}
%%%%%%%%%%%%%%%%%%%%%%%%%%%%%%%%%%%%%%%%%%%%%%%%%%%%%%%%%%%%%%%%%%%%%%%%%%%%%%%%%%%%
	A natural definition of an eigenvalue is a value $\lambda$ for which there is
	$(u,v)\in W^{1,p}_0(\Omega)\times W^{1,p}(\Omega)\setminus\{(0,0)\}$ such
	that
	\begin{equation}\label{eq:ws}
		\begin{aligned}
			\int_{\Omega} |\nabla u|^{p-2}\nabla u\nabla w\, dx
			&=\lambda\alpha\int_{\Omega}|u|^{\alpha-2}u|v|^{\beta}w \,dx, \\
			\int_{\Omega} |\nabla v|^{q-2}\nabla v\nabla z\, dx
			&=\lambda\beta\int_{\Omega}|u|^{\alpha}|v|^{\beta-2}v z\, dx, \\
		\end{aligned}
	\end{equation}
	for all $(w,z)\in W^{1,p}_0(\Omega)\times W^{1,q}(\Omega);$ that is, $(u,v)$ is a nontrivial
	solution of \eqref{eq:problema}--\eqref{eq:bc}. In this context, the pair
	$(u,v)$ is called an eigenfunction corresponding to $\lambda.$ 

    Note that, if $\alpha>1$ then $(u,v)\equiv (0,1)$ is a solution of 
    \eqref{eq:problema}--\eqref{eq:bc} for all
    $\lambda\in\mathbb{R},$ that is every $\lambda\in\mathbb{R}$ is 
    an eigenvalue. We say that a value $\lambda$ is a non trivial 
    eigenvalue if there is
    $(u,v)\in W^{1,p}_0(\Omega)\times W^{1,q}(\Omega)$ such that 
    $uv\not\equiv0$ in $\Omega$ and $(u,v)$ is an eigenfunction 
    corresponding to $\lambda.$ 

    \begin{remark}\label{re:elcero}
		If $(u,v)\in W^{1,p}_0(\Omega)\times W^{1,p}(\Omega)$ is a solution of
        \eqref{eq:problema}--\eqref{eq:bc} with
        $\lambda=0$ then
        \[
	    	\int_{\Omega} |\nabla u|^{p-2}\nabla u\nabla w\, dx=
		    \int_{\Omega} |\nabla v|^{q-2}\nabla v\nabla z\, dx=0
        \]
        for all $(w,z)\in W^{1,p}_0(\Omega)\times W^{1,p}(\Omega).$
        Therefore $u\equiv 0$ and $v$ is constant, that is $0$ is a simple
        eigenvalue.
	\end{remark}

    If $\lambda$ is a non trivial eigenvalue then there is
    $(u,v)\in W^{1,p}_0(\Omega)\times W^{1,q}(\Omega)$ such that $uv\not\equiv0$
    in $\Omega$ and $(u,v)$ is a solution of   \eqref{eq:problema}--\eqref{eq:bc}.
    Then
    \begin{align*}
		\int_{\Omega} |\nabla u|^{p}\, dx
         &=\lambda\alpha\int_{\Omega}|u|^{\alpha}|v|^{\beta}\, dx, \\
		 \int_{\Omega} |\nabla v|^{q}\, dx&=\lambda\beta\int_{\Omega}|u|^{\alpha}
		 |v|^{\beta} \, dx.
	\end{align*}
    Therefore, using that $\frac{\alpha}{p}+\frac{\beta}{q}=1,$ we have
    that
    \begin{equation}\label{eq:autcarac}
		\lambda =\dfrac{\displaystyle\int_{\Omega}\dfrac{|\nabla u|^p}{p}\, dx +
        \int_{\Omega}\dfrac{|\nabla v|^q}{q}\, dx}
         {\displaystyle\int_{\Omega} |u|^\alpha|v|^{\beta}\, dx}\ge0
    \end{equation}
    Moreover, by Remark \ref{re:elcero}, we have $\lambda>0.$

	On the other hand, taking $z\equiv1$ in \eqref{eq:ws}, we get
    \[
	    \int_{\Omega}|u|^{\alpha}|v|^{\beta-2}v \, dx=0.
    \]
    Thus, our candidate for first non trivial eigenvalue is
     \begin{equation}\label{eq:autovalor2}
			\lambda_{p,q}^{\alpha,\beta}  \coloneqq \inf
                 \left\{
                     \dfrac{\displaystyle\int_{\Omega}
                     \dfrac{|\nabla u|^p}{p}\, dx +
                      \int_{\Omega}\dfrac{|\nabla v|^q}{q}\, dx}
                      {\displaystyle\int_{\Omega} |u|^\alpha|v|^{\beta}\, dx}
                      \colon
                      (u,v)\in\mathcal{A}_{p,q}^{\alpha,\beta}
                 \right\}
		\end{equation}
	where
        \[
            \mathcal{A}_{p,q}^{\alpha,\beta}\coloneqq\left\{(u,v)\in 
            W^{1,p}_0(\Omega)\times
             W^{1,q}(\Omega)\colon
             uv\not\equiv0\text{ and }\int_{\Omega}|u|^{\alpha}|v|^{\beta-2}v
             \, dx=0\right\}.
        \]
    
    \subsection{Scaling invariance of $\lambda_1$.}
   		If we take $(u,v)\in \mathcal{A}_{p,q}^{\alpha,\beta}$ such that
       	\begin{equation}\label{normaliz}
       		\int_{\Omega} |u|^\alpha|v|^{\beta}\, dx = 1
       	\end{equation}
       	and we scale both functions according to
       	$$
       		\tilde{u} = a u \qquad \tilde{v} = b v
       	$$
       	we get
		$
       		\int_{\Omega} 
       		|\tilde{u}|^\alpha|\tilde{v}|^{\beta}\, dx = a^\alpha b^\beta$.
       	Then, to still have \eqref{normaliz} we impose
		$a^\alpha b^\beta =1$.
		On the other hand, we have
		$$
			\displaystyle\int_{\Omega}\dfrac{|\nabla \tilde{u}|^p}{p}\, dx +
                      \int_{\Omega}\dfrac{|\nabla \tilde{v}|^q}{q}\, dx =
                      \displaystyle a^p \int_{\Omega}\dfrac{|\nabla u|^p}{p}\, dx + b^q
                      \int_{\Omega}\dfrac{|\nabla v|^q}{q}\, dx := a^p A + b^q B,
		$$
		and then we want to compute
		$$
			\min_{a^\alpha b^\beta =1} a^p A + b^q B.
		$$
		This leads to (using Lagrange's multipliers)
		$
			p a^{p-1} A = \theta \alpha a^{\alpha-1} b^\beta$ and $
			q b^{q-1} B = \theta \beta a^{\alpha} b^{\beta-1}$,
		with
		$a^\alpha b^\beta =1$.
		That is,
		$
			p a^{p} A = \theta \alpha$ and $
			q b^{q} B = \theta \beta$
		and we arrive to
		$$
			\beta p a^{p} A = \alpha q b^{q} B.
		$$
		This computation shows that in a minimizing sequence we can assume that the terms
		$$
			\displaystyle\int_{\Omega}\dfrac{|\nabla {u_n}|^p}{p}\, dx \qquad
			 \mbox{and} \qquad
                      \int_{\Omega}\dfrac{|\nabla {v_n}|^q}{q}\, dx
		$$
		are of the same order. 

    \subsection{Is $\lambda_{p,q}^{\alpha,\beta}$ a non trivial eigenvalue?}
    	We start showing that $\lambda_{p,q}^{\alpha,\beta}$ is not a non trivial
    	eigenvalue when $\alpha =0$ or $\beta=0$.
    	
    	Observe that if $q=\beta$ and $\alpha=0$ then
    	$\lambda_{p,q}^{0,q}\ge \nicefrac{\lambda_q^\mathbf{N}}{q}$
    	where $\lambda_q^\mathbf{N}$ is the first non trivial eigenvalue of the Neumann 
    	$q-$Laplacian
    	that is
    	\[
    		\lambda_q^{\mathbf{N}}=\min\left\{
    		\dfrac{\displaystyle \int_{\Omega }|\nabla v|^q \, dx}{
    		\displaystyle \int_{\Omega }|v|^q \, dx}\colon v\in W^{1,q}(\Omega)
    		\setminus\{0\}, \int_{\Omega}|v|^{q-2}v\,dx=0\right\}.
    	\]
    	Moreover, if $\phi\in C^{1}_{0}(\Omega)$ and $v$ is an eigenfunction
    	corresponding to $\lambda_q^\mathbf{N}$ such that $\phi v\not\equiv0$ 
    	then $(\varepsilon\phi,v)\in\mathcal{A}_{p,q}^{0,q}$
    	for all $\varepsilon>0.$ Then
    	\[
    		\dfrac{\lambda_q^\mathbf{N}}{q}\le \lambda_{p,q}^{0,q}
    		\le\varepsilon^p \dfrac{\displaystyle \int_{\Omega } \frac{|\nabla\phi |^p}{p} \, dx}{
    		\displaystyle \int_{\Omega }|v|^q \, dx} 
    		+\dfrac{\displaystyle \int_{\Omega }\frac{|\nabla v|^q}{q} \, dx}{
    		\displaystyle \int_{\Omega }|v|^q \, dx}
    		=\varepsilon^p \dfrac{\displaystyle \int_{\Omega }\frac{|\nabla\phi|^p}{p} \, dx}{
    		\displaystyle \int_{\Omega }|v|^q \, dx} + \dfrac{\lambda_q^\mathbf{N}}q
    		\quad\forall\varepsilon>0.
    	\]
    	Therefore, passing to the limit as $\varepsilon\to 0$ we have that
    	$\lambda_{p,q}^{0,q}= \nicefrac{\lambda_q^\mathbf{N}}{q}$ 
    	
    	We claim that $\lambda_{p,q}^{0,q}$ is not 
    	a non trivial eigenvalue. Suppose, contrary to our claim, that 
    	$\lambda_{p,q}^{0,q}$ is a non trivial 
    	eigenvalue. Then there exists $(u,v)\in \mathcal{A}_{p,q}^{0,q}$ such that
    	\[
    		\lambda_{p,q}^{0,q}
    		= \dfrac{\displaystyle \int_{\Omega }\frac{|\nabla u |^p}{p} \, dx}{
    		\displaystyle \int_{\Omega }|v|^q \, dx} 
    		+\dfrac{\displaystyle \int_{\Omega } \frac{|\nabla v|^q}{q} \, dx}{
    		\displaystyle \int_{\Omega }|v|^q \, dx}>
    		\dfrac{\displaystyle \int_{\Omega }\frac{|\nabla v|^q}{q} \, dx}{
    		\displaystyle \int_{\Omega }|v|^q \, dx}
    	\]
    	since $u\neq0.$ Therefore $\nicefrac{\lambda_q^\mathbf{N}}{q}=
    	\lambda_{p,q}^{0,q}>\nicefrac{\lambda_q^\mathbf{N}}{q},$ a contradiction that implies that $\lambda_{p,q}^{0,q}$ is not a non trivial eigenvalue.
    	
    	Similarly, if $p=\alpha$ and $\beta=0$ then
    	$\lambda_{p,q}^{p,0}= \nicefrac{\lambda_p^\mathbf{D}}{p}$
    	is not a non trivial eigenvalue. Here $\lambda_p^\mathbf{D}$ is 
    	the first eigenvalue of the  Dirichlet $p-$Laplacian, that is 
    	\[
    		\lambda_p^{\mathbf{D}}=\min\left\{
    		\dfrac{\displaystyle \int_{\Omega }|\nabla u|^p \, dx}{
    		\displaystyle \int_{\Omega }|u|^p \, dx}\colon u\in W^{1,p}_0(\Omega)
    		\setminus\{0\}
    		\right\}.
    	\]

    	Now we show that if $\beta>1$ and $p\ge N$
    	 or $q>N$ then 
    	$\lambda_{p,q}^{\alpha,\beta}$ is the first non trivial eigenvalue.
    	
    	\begin{proof}[Proof of Theorem \ref{teo:autovalor}]
			By \eqref{eq:autcarac} and \eqref{eq:autovalor2}, 
			we only need to prove that $\lambda_{p,q}^{\alpha,\beta} $ is a 
			non trivial eigenvalue. Let $\{(u_n,v_n)\}_{n\in\mathbb{N}}\subset
	    	W_0^{1,p}(\Omega)\times W^{1,q}(\Omega)$ such that
	    	\begin{align}
				\label{eq:autovalor1}\int_{\Omega}|u_n|^{\alpha}|v_n|^{\beta-2}v_n\, dx&=0,\\
				\label{eq:autovalor2.33}\int_{\Omega}|u_n|^{\alpha}|v_n|^{\beta}\, dx&=1,
	    	\end{align}
			and
			\begin{equation}\label{eq:autovalor3}
				\lambda_{p,q}^{\alpha,\beta} =\lim_{n\to\infty}\int_{\Omega}
				\dfrac{|\nabla u_n|^p}{p}\, dx +
				\int_{\Omega}\dfrac{|\nabla v_n|^q}{q}\, dx.
			\end{equation}
			Then, using the Poincare inequality, we have that $\{u_n\}_{n\in\mathbb{N}}$
			is bounded in $W^{1,p}(\Omega).$

			We now split the rest of the proof into 2 cases.
		
			{\it Case $\beta>1$ and  $p\ge N$}. 
			By the Sobolev embedding theorem,
			there exist a subsequence, still denoted by $\{u_n\}_{n\in\mathbb{N}},$ 
			and
			$u\in W^{1,p}_0(\Omega)$  such that
			\begin{equation}\label{eq:autovalor4}
				\begin{aligned}
					&u_n\rightharpoonup u \text{ weakly in }W^{1,p}_0(\Omega),\\
					&|u_n|^{\alpha}\to |u|^{\alpha} \text{ strongly in }
					L^{\frac{p}{\alpha}}(\Omega),\\
					&|u_n|^{\alpha}\to |u|^{\alpha} \text{ strongly in }
					L^{\frac{q}{q-\beta+1}}(\Omega).\\
				\end{aligned}
			\end{equation}
		
			On the other hand, by \eqref{eq:autovalor3} and 
			Lemma \ref{lema:Poincare1}, we have
			that $\{v_n\}_{n\in\mathbb{N}}$ is bounded in $W^{1,q}(\Omega).$ Hence,
			by the Sobolev embedding theorem,
			there exist a subsequence, still denoted by 
			$\{v_n\}_{n\in\mathbb{N}},$ and
			$v\in W^{1,q}(\Omega)$  such that
			\begin{equation}\label{eq:autovalor5}
				\begin{aligned}
					&v_n\rightharpoonup v \text{ weakly in }W^{1,q}(\Omega),\\
					&|v_n|^{\beta}\to |v|^{\beta} \text{ strongly in }
					L^{\frac{q}{\beta}}(\Omega),\\
					&|v_n|^{\beta-2}v_n\to |v|^{\beta-2}v \text{ strongly in }
					L^{\frac{q}{\beta-1}}(\Omega).\\
				\end{aligned}
			\end{equation}
		
			By  \eqref{eq:autovalor3}, \eqref{eq:autovalor4}, and
			\eqref{eq:autovalor5}, we have that
			\begin{equation}\label{eq:autovalor6}
				\lambda_{p,q}^{\alpha,\beta} \ge\int_{\Omega}\dfrac{|\nabla u|^p}{p}\, dx
				+\int_{\Omega}\dfrac{|\nabla v|^q}{q}\, dx.
			\end{equation}
		
			On the other hand, by \eqref{eq:autovalor1}, \eqref{eq:autovalor2.33},
			\eqref{eq:autovalor4}, and \eqref{eq:autovalor5}, we get
			\[	
				\int_{\Omega}|u|^{\alpha}|v|^{\beta-2}v\, dx=0,\text{ and }
				\int_{\Omega}|u|^{\alpha}|v|^{\beta}\, dx=1.
			\]
			Then $(u,v)\in\mathcal{A}_{p,q}^{\alpha,\beta},$ and by \eqref{eq:autovalor6} and
			\eqref{eq:autovalor} we have that
			\[
				\lambda_{p,q}^{\alpha,\beta} =\int_{\Omega}\dfrac{|\nabla u|^p}{p}\, dx
				+\int_{\Omega}\dfrac{|\nabla v|^q}{q}\, dx,
			\]
			that is $(u,v)$ is a minimizer of \eqref{eq:autovalor}. Therefore $(u,v)$
			 is an eigenfunction corresponding to $\lambda_{p,q}^{\alpha,\beta}.$
		
			{\it Case $q>N$}. By the Sobolev embedding theorem,
			there exist a subsequence, still denoted by $\{u_n\}_{n\in\mathbb{N}},$ and
			$u\in W^{1,p}_0(\Omega)$  such that
			\begin{equation}\label{eq:autovalor7}
				\begin{aligned}
					u_n\rightharpoonup u &\text{ weakly in }W^{1,p}_0(\Omega),\\
					|u_n|^{\alpha}\to |u|^{\alpha} &\text{ strongly in }
					L^{\frac{p}{\alpha}}(\Omega).
				\end{aligned}
			\end{equation}
		
			On the other hand, by \eqref{eq:autovalor3} and Lemma \ref{lema:Poincare2}, we have
			that $\{v_n\}_{n\in\mathbb{N}}$ is bounded in $W^{1,q}(\Omega).$ Hence,
			by the Sobolev embedding theorem,
			there exist a subsequence, still denoted by $\{v_n\}_{n\in\mathbb{N}},$ and
			$v\in W^{1,q}(\Omega)$  such that
			\begin{equation}\label{eq:autovalor8}
				\begin{aligned}
					v_n\rightharpoonup v &\text{ weakly in }W^{1,q}(\Omega),\\
					v_n\to v &\text{ strongly in } C(\overline{\Omega}).
				\end{aligned}
			\end{equation}
		
			By  \eqref{eq:autovalor3}, \eqref{eq:autovalor7}, and
			\eqref{eq:autovalor8}, we have that
				\begin{equation}\label{eq:autovalor9}
				\lambda_{p,q}^{\alpha,\beta} \ge\int_{\Omega}\dfrac{|\nabla u|^p}{p}\, dx
				+\int_{\Omega}\dfrac{|\nabla v|^q}{q}\, dx.
			\end{equation}
		
			On the other hand, by \eqref{eq:autovalor1}, \eqref{eq:autovalor2.33},
			\eqref{eq:autovalor7}, and \eqref{eq:autovalor8}, we get
			\[
				\int_{\Omega}|u|^{\alpha}|v|^{\beta-2}v\, dx=0 \qquad \text{ and } \qquad
				\int_{\Omega}|u|^{\alpha}|v|^{\beta}\, dx=1,
			\]
			since $\beta>1.$ Then $(u,v)\in\mathcal{A}_{p,q}^{\alpha,\beta},$ and by
			\eqref{eq:autovalor9} and \eqref{eq:autovalor} we have that
			\[
				\lambda_{p,q}^{\alpha,\beta} =\int_{\Omega}\dfrac{|\nabla u|^p}{p}\, dx
				+\int_{\Omega}\dfrac{|\nabla v|^q}{q}\, dx,
			\]
			which concludes the proof.
    	\end{proof}

    \begin{remark}
		Note that, if $(u,v)$ is a minimizer of \eqref{eq:autovalor} then so is 
		$(|u|,v),$
		that is if  $(u,v)$ is a solution of 
		\eqref{eq:problema}--\eqref{eq:bc} with
		$\lambda=\lambda_{p,q}^{\alpha,\beta} $ then we can assume 
		that $u\ge0.$ Moreover, due to the results in \cite{JLV}, we get 
		$u>0$ in 
		$\Omega$ for $p$ and $q$ large enough.
	\end{remark}

%%%%%%%%%%%%%%%%%%%%%%%%%%%%%%%%%%%%%%%%%%%%%%%%%%%%%%%%%%%%%%%%%%%%%%%%%%%%%%%%%%%%%%%%%%
\section{The limit as $p,q\to \infty$} \label{sect-lim}
%%%%%%%%%%%%%%%%%%%%%%%%%%%%%%%%%%%%%%%%%%%%%%%%%%%%%%%%%%%%%%%%%%%%%%%%%%%%%%%%%%%%%%%%%%
	From now on, to simplify the notation, we write
	$\lambda_{p,q}$ instead of $\lambda_{p,q}^{\alpha,\beta}$
	and by $(u_{p,q},v_{p,q})$ we denote an eigenfunction corresponding to 
	$\lambda=\lambda_{p,q}^{\alpha,\beta} $
 normalized with $\int_{\Omega} |u_{p,q}|^\alpha|v_{p,q}|^{\beta}\, dx =1$.
	
	Recall that we have assumed that
	$$
		\frac{\alpha}{p} \to \Gamma \in (0,1), \quad\mbox{and}\quad \frac{q}{p} 
		\to Q\in(0,\infty)
		\quad\mbox{as } p,q\to \infty.
	$$
    In addition, since $\frac{\alpha}p + \frac{\beta}{q} =1,$ 
    we get
	$$
		\frac{\beta}{q} \to 1-\Gamma \quad\mbox{as } p,q\to \infty.
	$$
	
	Now, we deal with the limit as $p,q\to \infty$ in a variational setting 
	(showing the first part of Theorem \ref{teo:autovalor.limite.intro}).

	\begin{lema} \label{lema.con.unif} 
		Under the assumption \eqref{eq:A},
 		there exists a sequence $\{(p_n,q_n)\}_{n\in\mathbb{N}}$ such that
 		$p_n,q_n \to \infty,$
		$$
			 u_{n} \to u_\infty,\quad v_{n} \to v_\infty
			 \quad \mbox{ uniformly in } \overline{\Omega}
			 \mbox{ as } n\to \infty, 
		$$
		where $(u_n,v_n)$ is an eigenfunction
		corresponding to $\lambda_{p_n,q_n}$ for all $n\in \mathbb{N}.$
		Moreover, 
		$$
			\left(\lambda_{p,q}\right)^{1/p} 
			\to \Lambda_\infty(\Gamma,Q) \coloneqq
			\inf \left\{
			\frac{\displaystyle \max \left\{ 
			\| \nabla w \|_{L^\infty (\Omega)}; \,  
			\| \nabla z \|^Q_{L^\infty (\Omega)}\right\} }{
			\displaystyle \| |w|^{\Gamma} 
			|z|^{(1-\Gamma)Q} \|_{L^\infty (\Omega)}}
			\colon (w,z)\in\mathcal{A_{\infty}}\right\}
 		$$
		as $p,q\to \infty$ and $(u_\infty,v_\infty)$ is a minimizer of
		$\Lambda(\Gamma,Q)$.
	\end{lema}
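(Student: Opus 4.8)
The plan is to establish the two one–sided bounds $\limsup_{p,q\to\infty}(\lambda_{p,q})^{1/p}\le\Lambda_\infty(\Gamma,Q)$ and $\liminf_{p,q\to\infty}(\lambda_{p,q})^{1/p}\ge\Lambda_\infty(\Gamma,Q)$, and along the way to extract a uniformly convergent subsequence of normalized eigenfunctions whose limit realizes the infimum defining $\Lambda_\infty(\Gamma,Q)$. The tool I would use repeatedly is the elementary fact that if $f_n\ge 0$ converges uniformly on $\overline{\Omega}$ to $f$ and $p_n\to\infty$, then $\left(\int_\Omega f_n^{p_n}\,dx\right)^{1/p_n}\to\|f\|_{L^\infty(\Omega)}$, together with $p^{1/p}\to 1$, $|\Omega|^{1/p}\to 1$, and the fact that $(A+B)^{1/p}\to\max\{a,b\}$ whenever $A^{1/p}\to a$ and $B^{1/p}\to b$.

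For the upper bound I would fix an admissible pair $(w,z)\in\mathcal{A}_\infty$, which for $p,q$ large satisfies $w\in W^{1,p}_0(\Omega)$ and $z\in W^{1,q}(\Omega)$. The subtlety is that $(w,z)$ need not satisfy the exact constraint $\int_\Omega|w|^\alpha|z|^{\beta-2}z\,dx=0$ defining $\mathcal{A}_{p,q}^{\alpha,\beta}$. I would correct this by replacing $z$ with $z_t\coloneqq z_+-t\,z_-$ and choosing $t=t_{p,q}>0$ so that $\int_\Omega|w|^\alpha|z_t|^{\beta-2}z_t\,dx=0$; the balance condition in $\mathcal{A}_\infty$ forces the two half–integrals $\int_{\{z>0\}}|w|^\alpha z_+^{\beta-1}$ and $\int_{\{z<0\}}|w|^\alpha z_-^{\beta-1}$ to have the same $1/p$–limit, whence $t_{p,q}\to 1$ and $z_t\to z$. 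Then $(w,z_t)\in\mathcal{A}_{p,q}^{\alpha,\beta}$ is admissible in \eqref{eq:autovalor}, and taking $1/p$–th powers of the Rayleigh quotient gives
\[
\limsup_{p,q\to\infty}(\lambda_{p,q})^{1/p}\le \frac{\max\{\|\nabla w\|_{L^\infty(\Omega)};\ \|\nabla z\|^Q_{L^\infty(\Omega)}\}}{\||w|^\Gamma|z|^{(1-\Gamma)Q}\|_{L^\infty(\Omega)}},
\]
and the infimum over $(w,z)\in\mathcal{A}_\infty$ yields $\limsup(\lambda_{p,q})^{1/p}\le\Lambda_\infty(\Gamma,Q)$.

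For compactness, from $\int_\Omega\frac{|\nabla u_{p,q}|^p}{p}\,dx\le\lambda_{p,q}$ and $\int_\Omega\frac{|\nabla v_{p,q}|^q}{q}\,dx\le\lambda_{p,q}$ together with the bound just obtained, I get $\limsup\|\nabla u_{p,q}\|_{L^p}\le\Lambda_\infty$ and $\limsup\|\nabla v_{p,q}\|_{L^q}\le\Lambda_\infty^{1/Q}$. Hölder's inequality then bounds $\|\nabla u_{p,q}\|_{L^m}$ and $\|\nabla v_{p,q}\|_{L^m}$ uniformly for each fixed $m>N$; combined with the Poincaré control of Lemmas \ref{lema:Poincare1}--\ref{lema:Poincare2} and the compact embedding $W^{1,m}\hookrightarrow C^{0,1-N/m}(\overline{\Omega})$, Arzel\`a--Ascoli produces a sequence $(p_n,q_n)\to\infty$ with $u_n\to u_\infty$, $v_n\to v_\infty$ uniformly, where $u_\infty\in W^{1,\infty}_0(\Omega)$ and $v_\infty\in W^{1,\infty}(\Omega)$, and by weak lower semicontinuity of the $L^m$–gradient norms followed by $m\to\infty$ one gets $\|\nabla u_\infty\|_{L^\infty}\le\liminf(\lambda_{p_n,q_n})^{1/p_n}$ and $\|\nabla v_\infty\|^Q_{L^\infty}\le\liminf(\lambda_{p_n,q_n})^{1/p_n}$. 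Passing the constraints to the limit, note $\alpha_n/p_n\to\Gamma$ and $\beta_n/p_n=(\beta_n/q_n)(q_n/p_n)\to(1-\Gamma)Q$, so uniform convergence and the cited $L^{p_n}\to L^\infty$ fact turn the normalization $\int_\Omega|u_n|^{\alpha_n}|v_n|^{\beta_n}\,dx=1$ into $\||u_\infty|^\Gamma|v_\infty|^{(1-\Gamma)Q}\|_{L^\infty}=1$; splitting $\int_\Omega|u_n|^{\alpha_n}|v_n|^{\beta_n-2}v_n\,dx=0$ over $\{v_n>0\}$ and $\{v_n<0\}$ and taking $1/p_n$–th powers yields $\max_{\overline{\Omega}}|u_\infty|^\Gamma(v_\infty)_+^{(1-\Gamma)Q}=\max_{\overline{\Omega}}|u_\infty|^\Gamma(v_\infty)_-^{(1-\Gamma)Q}$, i.e.\ $(u_\infty,v_\infty)\in\mathcal{A}_\infty$. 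This balance step is the main obstacle: the integration sets $\{v_n\gtrless0\}$ vary with $n$, so the lower bound in the $L^{p_n}\to L^\infty$ passage must be justified using that the relevant maxima, being positive by the normalization, are attained where $v_\infty\ne0$, so that $v_n$ keeps a fixed sign on a neighborhood of the maximizers for $n$ large.

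Finally, the lower bound follows by combining the gradient estimates with admissibility: since $(u_\infty,v_\infty)\in\mathcal{A}_\infty$ has denominator $1$,
\[
\Lambda_\infty(\Gamma,Q)\le\max\{\|\nabla u_\infty\|_{L^\infty};\ \|\nabla v_\infty\|^Q_{L^\infty}\}\le\liminf_{n\to\infty}(\lambda_{p_n,q_n})^{1/p_n}.
\]
Applying this to an arbitrary sequence realizing $\liminf_{p,q\to\infty}(\lambda_{p,q})^{1/p}$ (after a compactness extraction) gives $\liminf(\lambda_{p,q})^{1/p}\ge\Lambda_\infty(\Gamma,Q)$ for the full family, so together with the upper bound $(\lambda_{p,q})^{1/p}\to\Lambda_\infty(\Gamma,Q)$ as $p,q\to\infty$. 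The two displayed inequalities then force $\max\{\|\nabla u_\infty\|_{L^\infty};\ \|\nabla v_\infty\|^Q_{L^\infty}\}=\Lambda_\infty(\Gamma,Q)$, which is exactly the statement that $(u_\infty,v_\infty)$ is a minimizer for $\Lambda_\infty(\Gamma,Q)$.
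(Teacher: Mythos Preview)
Your argument is correct and follows the same overall architecture as the paper's proof: an upper bound obtained by testing with Lipschitz pairs, uniform gradient bounds yielding compactness in $W^{1,m}$ for every fixed $m>N$, passage of the normalization and balance constraints to $L^\infty$ statements, and a lower bound coming from admissibility of the limit pair.

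The one genuine difference is in how you manufacture a test pair in $\mathcal{A}_{p,q}^{\alpha,\beta}$ from a given $(w,z)\in\mathcal{A}_\infty$. The paper replaces $z$ by a vertical shift $z-K(p,q)$ chosen so that $\int_\Omega|w|^\alpha|z-K|^{\beta-2}(z-K)\,dx=0$, extracts a subsequential limit $K\to k$, and lands on the pair $(w,z-k)$. You instead rescale the negative part, $z_t=z_+-t\,z_-$, and use the balance condition in $\mathcal{A}_\infty$ to force $t_{p,q}\to1$. Your device is a bit more direct: it feeds back exactly the Rayleigh quotient of the original $(w,z)$ in the limit, so the bound $\limsup(\lambda_{p,q})^{1/p}\le\Lambda_\infty(\Gamma,Q)$ follows immediately by taking the infimum. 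The paper's shift requires the extra (unstated but true) observation that, for $(w,z)\in\mathcal{A}_\infty$, any further shift $z\mapsto z-k$ can only increase $\||w|^\Gamma|z-k|^{(1-\Gamma)Q}\|_{L^\infty}$, so the Rayleigh quotient of $(w,z-k)$ is no larger than that of $(w,z)$.

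One small imprecision: your appeal to Lemmas~\ref{lema:Poincare1}--\ref{lema:Poincare2} for the $W^{1,m}$ compactness is not quite on target, since those statements are in $L^q$ with constants that depend on $q$ (and, in the first lemma, on the fixed weight $u$). What you actually need---and clearly have in mind---is that each $v_{p,q}$ vanishes somewhere (it changes sign by the integral constraint), so Lemma~\ref{lema:Poincare2} applied at the \emph{fixed} exponent $m>N$ gives $\|v_{p,q}\|_{L^m}\le C(m,N,\Omega)\|\nabla v_{p,q}\|_{L^m}$, which is the uniform bound feeding into Arzel\`a--Ascoli.
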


	\begin{proof}
		We first look for a uniform bound for $\left(\lambda_{p,q}\right)^{1/p}$.
		To this end, let us consider a non-negative Lipschitz function 
		$w\in W^{1,\infty }(\Omega)$ that vanishes on $\partial \Omega$.
		
		Once this functions is fixed we choose $z\in W^{1,\infty } (\Omega)$ a Lipschitz 
		function and after that we choose $K=K(p,q)$ such that
		$$
			\int_{\Omega}|w|^{\alpha}|(z-K)|^{\beta-2}(z-K)
             \, dx=0.
		$$
		Note that $K(p,q)$ is bounded, in fact, we have $\inf\{z(x)\colon x\in \Omega\} 
		\leq K(p,q) \leq \sup\{z(x)\colon x\in\Omega\}$.
		We normalize according to
		$$
			\int_{\Omega}|w|^{\alpha}|(z-K)|^{\beta}
             \, dx=1.
		$$

		Hence, using the pair $(w,z-K)$ as test in \eqref{eq:autovalor2.33} we get
		$$
			\lambda_{p,q} \leq \displaystyle\int_{\Omega}\dfrac{|\nabla w|^p}{p}\, dx +
       		 \int_{\Omega}\dfrac{|\nabla z|^q}{q}\, dx.
		$$
		Therefore
		\begin{equation}\label{cota.lambda}
			\begin{aligned}
				\displaystyle
					\limsup_{p,q \to \infty } (\lambda_{p,q})^{\nicefrac1p}
						&\leq \limsup_{p \to \infty }  
						\left\{ \frac1p \| \nabla z \|^p_{L^p (\Omega) }  +
						\frac1q \| \nabla w \|^q_{L^q (\Omega)  } \right\}^{1/p} \\[3pt]
                      & =  \max \left\{ \| \nabla z \|_{L^\infty (\Omega)  }  ;
                       \| \nabla w \|^Q_{L^\infty (\Omega)  } \right\}
					\leq C.
			\end{aligned}
		\end{equation}
		Thus, 
		there is a constant, $C$, independent of $p$ and $q$ such that, 
		for $p$ and $q$ large,
		$$ 
			(\lambda_{p,q})^{1/p} \leq C. 
		$$

		Let $(u_{p,q},v_{p,q})$ be a minimizer for $\lambda_{p,q}$ normalized by
		$
			\int_{\Omega}|u_{p,q}|^{\alpha}|v_{p,q}|^{\beta}
             \, dx=1.
		$
		Then, we have that
		$$ 
			\frac1p  \int_\Omega |\nabla u_{p,q}|^p \, dx + 
			\frac1q\int_\Omega | \nabla v_{p,q}|^q \, dx =  \lambda_{p,q},  $$
		from which we deduce using \eqref{cota.lambda} that
		\begin{equation}\label{cota.norma}
			\begin{aligned}
				\limsup_{p,q\to \infty} \| \nabla u_{p,q} \|_{L^p (\Omega)} &\le  
				\limsup_{p,q\to \infty}\left(p\lambda_{p,q}\right)^{\nicefrac1p} 
				=\limsup_{p,q\to \infty}\left(\lambda_{p,q}\right)^{\nicefrac1p} 
				\leq C,\\[5pt]
				\limsup_{p,q\to \infty} \| \nabla v_{p,q} \|_{L^q (\Omega)} &\le
  				\limsup_{p,q\to \infty}\left(q\lambda_{p,q}\right)^{\nicefrac1q} 
  				=\limsup_{p,q\to \infty}
  				\left[\left(\lambda_{p,q}\right)^{\nicefrac1p}\right]^{
  				\nicefrac{p}{q}}\\&=
  				\left[\limsup_{p,q\to \infty}
  				\left(\lambda_{p,q}\right)^{\nicefrac1p}\right]^{
  				\nicefrac{1}{Q}}   \leq C.
			\end{aligned}	
		\end{equation}

		Now, we argue as follows: We fix $r\in (N,\infty)$. 
		Using Holder's inequality, we obtain for $p,q>r$ large enough that
		\begin{equation}\label{cota.norma2}
			\left(\int_\Omega | \nabla u_{p,q}|^r  \,dx \right)^{\nicefrac1r} \le
			\left(\int_\Omega | \nabla u_{p,q}|^p  \,dx \right)^{\nicefrac1p} 
			|\Omega|^{\frac1r - \frac1p } \leq C. 
		\end{equation}
		Analogously, we have
		$$ 
			\left(\int_\Omega | \nabla v_{p,q}|^r \,dx  \right)^{\nicefrac1r} 
			\le
			\left(\int_\Omega | \nabla v_{p,q}|^q \,dx  \right)^{\nicefrac1q} 
			|\Omega|^{\frac1r - \frac1q } \leq C. 
		$$
		Hence, extracting a subsequence $\{(p_n,q_n)\}_{n\in\mathbb{N}}$ 
		$p_n,q_n\to \infty$ if necessary, we have that
		$$ 
			u_{n}=u_{p_n,q_n} \rightharpoonup u_\infty  
			\quad\text{and}\quad
			v_{n}=v_{p_n,q_n} \rightharpoonup v_\infty  
		$$
		weakly in $W^{1,r} (\Omega)$ for any $N<r<\infty$ 
		and uniformly in $\overline{\Omega}$.

		From \eqref{cota.norma} and \eqref{cota.norma2}, 
		we obtain that this weak limit verifies
		$$ 
			\left( \int_{\Omega} | \nabla u_{\infty} |^r \, dx \right)^{\nicefrac1r} 
			\le  |\Omega|^{\nicefrac1r}  
			\limsup_{p,q\to \infty}(\lambda_{p,q})^{\nicefrac1p}. 
		$$
		As we can assume that the above inequality 
		holds for every $r>N$ (using a diagonal argument), we get 
		that $u_\infty \in W^{1,\infty}_0 (\Omega)$ and moreover, taking the
		limit as $ r \to \infty $, we obtain
		$$ 
			| \nabla u_{\infty}(x) | \le  
			\liminf_{p,q\to \infty}(p\lambda_{p,q})^{\nicefrac1p}=
			\liminf_{p,q\to \infty}(\lambda_{p,q})^{\nicefrac1p} 
			\qquad \hbox{  a.e. }x \in \Omega. 			
		$$

		Analogously, we obtain that the function $v_\infty$ verifies that 
		$v_\infty\in W^{1,\infty}(\Omega)$ and
		\begin{align*}
			| \nabla v_{\infty}(x)| &\le  \liminf_{p,q\to \infty}
			\left(q\lambda_{p,q}\right)^{\nicefrac1q}=\liminf_{p,q\to \infty}
			\left(\lambda_{p,q}\right)^{\nicefrac1q}
			=\liminf_{p,q\to \infty}\left(\lambda_{p,q}\right)^{\nicefrac1q}\\
			&=\liminf_{p,q\to \infty}\left[\left(\lambda_{p,q}\right)^{\nicefrac1p}
			\right]^{\nicefrac{p}q}
			=\left[\liminf_{p,q\to \infty}\left(\lambda_{p,q}\right)^{\nicefrac1p}
			\right]^{\nicefrac{1}Q}
			 \qquad \hbox{  a.e. }x \in \Omega,  
		\end{align*}
		Then
		\[
			| \nabla v_{\infty}(x)|^Q\le
			\liminf_{p,q\to \infty}\left(\lambda_{p,q}\right)^{\nicefrac1p}
			\qquad \hbox{  a.e. }x \in \Omega,  
		\]

		From the uniform convergence and the normalization condition, we obtain that
		\begin{equation}\label{eq:inf1}
			\| |u_\infty|^\Gamma |v_\infty|^{(1-\Gamma)Q} \|_{L^\infty (\Omega)} =1,  
		\end{equation}
		and from
		$$
			\int_{\Omega}|u_{p,q}|^{\alpha}|v_{p,q}|^{\beta-2}v_{p,q}\, dx=0,
		$$
		we get
		\begin{equation}\label{eq:inf2}
				\max_{x\in\Omega} |u_\infty(x)|^\Gamma |(v_\infty(x))_+|^{(1-\Gamma)Q}  
			=  \max_{x\in\Omega} |u_\infty(x)|^\Gamma |(v_\infty(x))_-|^{(1-\Gamma)Q}.  
		\end{equation}
		Therefore, $(u_\infty,v_\infty)\in\mathcal{A}_{\infty}$ and we get
		\begin{equation}\label{eq:des1}
			\Lambda_\infty(\Gamma,Q)\le\frac{\displaystyle \max 
			\left\{ \| \nabla u_\infty\|_{L^\infty (\Omega)}; \,  
			\| \nabla v_\infty\|^Q_{L^\infty (\Omega)}\right\}}
			{\displaystyle \| |u_\infty|^{\Gamma} 
			|v_\infty|^{(1-\Gamma)Q} \|_{L^\infty (\Omega)} } \leq 
			\liminf_{p,q\to \infty}(\lambda_{p,q})^{\nicefrac1p}.
		\end{equation}

		Now, we note that since $K(p,q)$ is bounded, 
		there is a sequence $\{(p_n,q_n)\}$ such that 
		$$
			p_n, q_n\to \infty \qquad \mbox{and} \qquad K(p_n,q_n) \to k
		$$
		as $n\to \infty$. From \eqref{cota.lambda}, we get
		\begin{equation}\label{eq:des2}
			\limsup_{p,q\to \infty}(\lambda_{p,q})^{\nicefrac1p} \leq
			\frac{\displaystyle \max \left\{ \| \nabla w \|_{L^\infty (\Omega)}; 
			\,  \| \nabla (z-k) \|^Q_{L^\infty (\Omega)}
			\right\} }{ \displaystyle \| |w|^{\Gamma} |(z-k)|^{(1-\Gamma)Q} 
			\|_{L^\infty (\Omega)} }
		\end{equation}
		for every pair $(w,z-k)$ with
		$$ 
			\max_{x\in\Omega} |w(x)|^\Gamma |(z(x)-k)_+|^{(1-\Gamma)Q}  =  
			\max_{x\in\Omega} |w(x)|^\Gamma |(z(x)-k)_-|^{(1-\Gamma)Q} .  
		$$
		Thus
		\begin{equation}\label{eq:des3}
			\limsup_{p,q\to \infty}(\lambda_{p,q})^{\nicefrac1p} \leq
			\Lambda_{\infty}(\Gamma,Q).
		\end{equation}
		Therefore, by \eqref{eq:des1} and \eqref{eq:des3}, 
		we get
		$$ 
			\left(\lambda_{p,q}\right)^{\nicefrac1p}\to \Lambda_\infty(\Gamma,Q)
 		$$
 		as $p,q\to\infty,$ and $(u_\infty,v_\infty)$ is a minimizer of
 		$\Lambda_{\infty}(\Gamma,Q).$
 	\end{proof}

%%%%%%%%%%%%%%%%%%%%%%%%%%%%%%%%%%%%%%%%%%%%%%%%%%%%%%%%%%%%%%%%%%%%%%%%%%%%%%%%%%%%%%%%%%%%%%%
\section{The value of $\Lambda_\infty$ in a ball and in a rectangle.} 
\label{sect-ex}
%%%%%%%%%%%%%%%%%%%%%%%%%%%%%%%%%%%%%%%%%%%%%%%%%%%%%%%%%%%%%%%%%%%%%%%%%%%%%%%%%%%%%%%%%%%%%%
	\subsection{The case of a ball.} Now our aim is to compute the limit value 
	$\Lambda_\infty$ in the ball of radius $R$, that we denote as $B_R$.

	By symmetry reasons we have to choose $x_0=(a,0,\dots,0)$ with $0<a<R$, 
	the point where
	$$
		\| |u_\infty|^{\Gamma} |v_\infty|^{(1-\Gamma)Q} \|_{L^\infty (B_R)}
		= |u_\infty|^{\Gamma} |v_\infty|^{(1-\Gamma)Q}(x_0)=1.
	$$
	Note that we can choose $v_\infty$ to be symmetric (odd in the $x_1$-direction), 
	that is, $v_\infty (x_1,x_2,\dots,x_N)= - v_\infty (- x_1,x_2,\dots,x_N)$.

	Now we are lead to compute:
	$$
		\max \left\{ \| \nabla u_\infty\|_{L^\infty (B_R)}; \,  
		\| \nabla v_\infty\|^Q_{L^\infty (B_R)}\right\}.
	$$
	Observe that the best choice that we can make is to take $u_\infty$ as the cone
	$$
			u_\infty (x) = k_1 (R-|x|).
	$$
	Then we have
	$$
		\| \nabla u_\infty\|_{L^\infty (B_R)} = k_1
	\qquad \mbox{and} \qquad
		u_\infty (x_0) = k_1 (R-a).
	$$
	Concerning $v_\infty$ we can choose a plane
	$$
		v_\infty (x) = k_2 \langle x, e_1 \rangle.
	$$
	Then we have
	$$
		\| \nabla v_\infty\|_{L^\infty (B_R)} = k_2
		\qquad\mbox{and}\qquad
		v_\infty (x_0) = k_2 a.
	$$
	These functions $u_\infty$ and $v_\infty$ are depicted in the following figure.
	
	\begin{center}
		\begin{tikzpicture}
		\fill[
  			top color=gray!50,
 			bottom color=gray!10,
  			shading=axis,
  			opacity=0.25
  			] 
  			(0,0) circle (2cm and 0.5cm);
			\fill[
 				left color=gray!50!black,
  				right color=gray!50!black,
  				middle color=gray!50,
  				shading=axis,
  				opacity=0.25
  				] 
  			(2,0) -- (0,3) -- (-2,0) arc (180:360:2cm and 0.5cm);
			\draw 
  				(-2,0) arc (180:360:2cm and 0.5cm) -- (0,3) -- cycle;
			\draw[dashed]
  				(-2,0) arc (180:0:2cm and 0.5cm);
  			\draw [->]
  				(0,0) -- (2,0) node[below] {$R$} -- (2.5,0)
  				node[right] {$y$} ;
			\draw[->]
  				(0,0) --  (0,3) node[left] {$k_1R$}-- (0,3.5)
  				node[left] {$z$};
  			\draw[->]
  				(0,0) -- (-1,-1.4) node[left] {$x$};  
  			\draw
  				(-2,-1.3) -- (-.5,-.6) -- (2,.5) ;
  			\draw[dashed]
  				(-1.8,-0.4) -- (.5,.6) -- (2.2,1.3) ;
  			\draw
  				(-1.8,-0.4) -- (-2,-1.3);
  			\draw
  				(2,.5) -- (2.2,1.3) ;
  			\draw
  				(1.5,2) node[left] {$u_{\infty}$}  ;
  			\draw
  				(3,1) node[left] {$v_{\infty}$}  ;
		\end{tikzpicture}
	\end{center}

	Now we have to compute	
	$$
		\min_{k_1,k_2,a} \max \left\{ k_1; \,  k_2^Q\right\}
	$$
	with the restriction
	\begin{align*}
		\max_{x\in B_R}  |u_\infty|^{\Gamma} |v_\infty|^{(1-\Gamma)Q}
		& = \max_{0\leq s \leq R} ( k_1 (R-s))^{\Gamma} 
		(k_2 s)^{(1-\Gamma)Q}\\
		&=
		k_1^{\Gamma} k_2^{(1-\Gamma)Q}(R-a)^{\Gamma}a^{(1-\Gamma)Q}=1.
	\end{align*}
	Then we have to compute
	$$
		\max_{0\leq s \leq R} (R-s)^{\Gamma} s^{(1-\Gamma)Q}.
	$$
	We have that this maximum is attained at a point $a$ that satisfies 
	$$
		\Gamma a = Q(1-\Gamma) (R-a),
	$$
	hence, $a$ is given by
	$$
	a = \dfrac{Q(1-\Gamma)R}{\Gamma+Q(1-\Gamma)}.
	$$
	Therefore, the restriction is given by
	$$
		k_1^{\Gamma} k_2^{(1-\Gamma)Q} \left(
		\dfrac{\Gamma R}{\Gamma+Q(1-\Gamma)}\right)^{\Gamma}  
		\left(\dfrac{Q(1-\Gamma) R}{\Gamma+Q(1-\Gamma)}\right)^{(1-\Gamma)Q}=1.
	$$
	This gives
	$$
		k_1 = \Theta k_2^{\frac{\Gamma-1}{\Gamma}Q}
	$$
	with
	$$
		\Theta =\dfrac{\Gamma+Q(1-\Gamma)}{\Gamma R} 
		\left( \frac{\Gamma+Q(1-\Gamma)}{Q(1-\Gamma)R} \right)^{\frac{(1-\Gamma)Q}
		\Gamma}.
	$$
	Finally we arrive to
	$$
		\min_{k_2} \max 
		\left\{  \Theta k_2^{\frac{\Gamma-1}{\Gamma}Q}; \,  k_2^Q
		\right\}.
	$$
	We must have
	$$
		\Theta k_2^{\frac{\Gamma-1}{\Gamma}Q} = k_2^Q,
	$$
	and hence
	$$
		k_2 = \Theta^{\frac{\Gamma}Q}.
	$$
	We conclude that the optimal value for $\Lambda_\infty$ is given by
	$$
		\Lambda_\infty(\Gamma,Q) =  \Theta^{\Gamma} =
			\left(\dfrac{\Gamma+Q(1-\Gamma)}{\Gamma R}
			\right)^{\Gamma} 
		\left( \frac{\Gamma+Q(1-\Gamma)}{Q(1-\Gamma)R} 
		\right)^{(1-\Gamma)Q}.
	$$

	\subsection{The case of a rectangle.} Now we want to compute 
	$\Lambda_\infty(\Gamma,Q)$ 
	when $\Omega$ is the rectangle  $(-R,R)\times (-L,L) \subset \R^2$.
	Without loss of generality, we assume that $L\leq R.$

	Here, as for the case of the ball, we rely on symmetry. 
	We look for a point $x_0=(a,0)$ with $L\le a<R$, where
	$$
		\| |u_\infty|^{\Gamma} |v_\infty|^{(1-\Gamma)Q} \|_{L^\infty (\Omega)}
		= |u_\infty|^{\Gamma} |v_\infty|^{(1-\Gamma)Q}(x_0)=1.
	$$
	Note that we can choose $v_\infty$ to be symmetric (odd in the 
	$x$-direction), that is 
	$v_\infty (x,y)=- v_\infty (- x, y)$.

	Observe that the best choice that we can make is to take 
	$u_\infty$ as the cone
	$$
		u_\infty (x) = k_1 (\rho-|(x,y)-(a,0)|)_+,
	$$
	with $\rho=R-a \leq\min\{L,R-L\}.$ Then we have
	$$
		\| \nabla u_\infty\|_{L^\infty (\Omega)} = k_1 \qquad \mbox{and} \qquad
		u_\infty (x_0) = k_1 \rho .
	$$
	
	Concerning $v_\infty$, as before, we can choose a plane
	$$
		v_\infty (x) = k_2 x_1.
	$$
	Then we have
	$$
		\| \nabla v_\infty\|_{L^\infty (B_R)} = k_2 \qquad 
	\mbox{and} \qquad
		v_\infty (x_0) = k_2 a.
	$$

	Now we have to compute
	$$
		\min_{k_1,k_2,a} \max \left\{ k_1; \,  k_2^Q
		\right\}
	$$
	with the restriction
	\begin{align*}
		\max_{x\in\Omega}  |u_\infty|^{\Gamma} |v_\infty|^{(1-\Gamma)Q} 
		&= \max_{a\leq s \leq R} ( k_1 (R-s))^{\Gamma} (k_2 s)^{(1-\Gamma)Q}\\
		&=k_1^{\Gamma} k_2^{(1-\Gamma)Q} (R-a)^{\Gamma}  a^{(1-\Gamma)Q} =1.
	\end{align*}

	Then we have to compute
	\begin{equation}\label{eq:rectangulo}
		\max_{a\leq s \leq R} (R-s)^{\Gamma} s^{(1-\Gamma)Q}.
	\end{equation}
	
	When $\rho<L,$ this maximum is attained at a point $a$ that is given by
	$$
		\Gamma a = Q(1-\Gamma) (\rho+a-a),
	$$
	that is
	$$
		a = Q\frac{(1-\Gamma)}{\Gamma} \rho.
	$$

	Hence, with similar computations as the ones that we did for the ball we 
	obtain that  
	$$
		\Lambda_\infty(\Gamma,Q)=\left(\dfrac{\Gamma+Q(1-\Gamma)}{\Gamma R}
						\right)^{\Gamma} 
						\left( \dfrac{\Gamma+Q(1-\Gamma)}{Q(1-\Gamma)R} 
						\right)^{(1-\Gamma)Q}
						\qquad  \mbox{ if } 
						\dfrac{\Gamma R}{Q(1-\Gamma)} \leq L. 
	$$
	Observe that, in this case, $\Lambda _\infty(\Gamma,Q)$ coincides with the 	eigenvalue that we found in the case of the ball.
	
	When $\rho=L,$ \eqref{eq:rectangulo} 
	is attained at a point $a$ that is given by
	$a=R-L$ then
	$$
		\Lambda_\infty(\Gamma,Q)=\frac{1}{(R-L)^{\Gamma}  L^{1-\Gamma}  } 
						\qquad  \mbox{ if } 
						\dfrac{\Gamma R}{Q(1-\Gamma)} > L. 
	$$
	
	Note that computing the value of $\Lambda_\infty(\Gamma,Q)$ 
	for a general domain $\Omega$ is not straightforward.

%%%%%%%%%%%%%%%%%%%%%%%%%%%%%%%%%%%%%%%%%%%%%%%%%%

\section{Viscosity solutions} \label{sect-viscosity}

	In order to identify the limit PDE problem satisfied by 
	any limit $(u_\infty,v_\infty)$, we introduce the definition 
	of viscosity solutions. Since we deal with different boundary 
	conditions for the components $u_{p,q}$ (Dirichlet) and $v_{p,q}$ 
	(Neumann) we split the passage to the limit into two parts. 
	Also remark that $u_\infty$ is non-negative in $\overline{\Omega}$ 
	but $v_\infty$ changes sign. This is reflected in the fact that 
	they are solutions to quite different equations. First, we 
	deal with the equation and boundary condition verified by 
	$u_\infty$ and next we deal with $v_\infty$.

	\subsection{Passing to the limit in $u_{p,q}$.}
		Assuming that $u_{p,q}$ is smooth enough, we can rewrite the first 
		equation in \eqref{eq:problema} as
 		\begin{equation}\label{superecua}
 		\begin{array}{ll}
 			-|\nabla u_{p,q}|^{p-4} \left(|\nabla u_{p,q}|^{2}\Delta u_{p,q}
 			+(p-2) \Delta_\infty u_{p,q}
 			\right)=\alpha \lambda_{p,q}u_{p,q}^{\alpha-1}v_{p,q}^{\beta}.
			 \end{array}
 		\end{equation}
		Recall that $-\Delta_{\infty} u = -  
		\nabla u D^2 u (\nabla u)^t$.
		This equation is non-linear, elliptic (degenerate) 
		but not in divergence form, 
		thus it makes sense to consider viscosity sub-solutions and
		super-solutions of it.
		Let $x\in\Omega,$ $y\in \R$, $ z\in \R^N$, and $
		S$ a real symmetric matrix. 
		We consider the following function
		\begin{equation}\label{superoper}
 			H_{p}(x,y,z,S) =  \displaystyle 
 			-|z|^{p-4}\left(|z|^{2}\mbox{trace}(S) + (p-2) \langle S	
 			\cdot z,z\rangle\right) \\[8pt]
            \displaystyle - \alpha \lambda_{p,q} 
            |y|^{\alpha-2}y v_{p,q}(x)^\beta.
 		\end{equation}
		Observe that $H_p$ is elliptic in the sense that 
		$H_p(x,y,z,S)\ge H_p(x,y,z,S')$ if 
		$S\le S'$ in the sense of bilinear forms, and also that
		\eqref{superecua} can be written as 
		$H_{p}(x, u_{p,q},\nabla u_{p,q},D^2 u_{p,q})=0$.
 		We are thus interested in viscosity super and 
 		sub solutions of the partial differential equation
 		\begin{equation}\label{visco}
 			\begin{cases}
					H_{p}(x, u,\nabla u,D^2 u)=0 
 					 & \mbox{ in }\Omega,\\
 					u = 0  & \mbox{ on }\partial \Omega.
			\end{cases}
 		\end{equation}

 		\begin{defi} 
 			An upper semi-continuous function $u$ defined in 
 			$\Omega$ is a {\it viscosity sub-solution} of 
 			\eqref{visco} if, $u|_{\partial\Omega} \leq 0$ 
 			and, whenever $x_0\in\Omega $ and
			$\phi\in C^2(\Omega)$ are such that
			$u(x_0)=\phi(x_0)$ and $u-\phi$ has a strict local maximum 
			point at  $x_0$,
 			then
 			\[
 				H_{p}(x_0,\phi(x_0),\nabla\phi (x_0),D^2\phi(x_0))\leq 
 				0.
 			\]
 		\end{defi}

		\begin{defi} 
			A lower semi-continuous
			function $u$ defined in $\Omega$ is a 
			{\it viscosity super-solution} of 
			\eqref{visco} if, $u|_{\partial\Omega} \geq 0$
			and, whenever $x_0\in\Omega$ and 
			$\phi\in C^2(\Omega)$ are such that
			$u(x_0)=\phi(x_0)$ and $u-\phi$ has a strict local minimum 
			point at  $x_0$,
 			then
 			\[
 				H_{p}(x_0, \phi(x_0),
 				\nabla\phi(x_0),D^2\phi(x_0))\geq 0.
 			\]
 		\end{defi}

		We observe that in both of the above definitions 
		the second condition is required just in a neighbourhood of 
		$x_0$ and the strict inequality can be relaxed. We refer to
		\cite{CIL} for more details about general theory 
		of viscosity solutions, and to \cite{JLM2} 
		for viscosity solutions related to the 
		$\infty-$Laplacian and the $p-$Laplacian operators.
		The following result can be shown as in 
		\cite[Proposition 2.4]{MRU}, therefore we omit the proof here.

		\begin{lema}\label{sol.debil.es.viscosa}
			A continuous weak solution to the equation
			\begin{equation}\label{1.1.u}
				\begin{cases}
					-\Delta_p u  = \lambda \alpha |
					u|^{\alpha-2} u v_{p,q}^\beta  & \text{in } 
					\Omega, \\
					u=0  & \text{on } \partial \Omega,
				\end{cases}
			\end{equation}
			is a viscosity solution to $(\ref{visco})$.
		\end{lema}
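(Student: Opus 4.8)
The goal is to show that a continuous weak solution of \eqref{1.1.u} is a viscosity solution of \eqref{visco}. The author says to follow \cite[Proposition 2.4]{MRU}, so the plan is to mimic the standard argument for converting between weak and viscosity formulations of $p$-Laplacian–type equations. I would split the proof into two symmetric halves: verifying the viscosity supersolution property and the viscosity subsolution property. The boundary condition $u|_{\partial\Omega}=0$ is immediate from the weak formulation, so the content is entirely interior.

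Let me sketch the supersolution half. Suppose $u$ is a continuous weak supersolution, fix $x_0\in\Omega$, and let $\phi\in C^2$ touch $u$ from below at $x_0$ with $u-\phi$ attaining a strict local minimum there. I would argue by contradiction: assume $H_p(x_0,\phi(x_0),\nabla\phi(x_0),D^2\phi(x_0))<0$. The delicate point is the possible degeneracy $\nabla\phi(x_0)=0$, where the operator in \eqref{superoper} loses meaning because of the factor $|z|^{p-4}$. If $\nabla\phi(x_0)\neq0$, then by continuity the strict inequality $H_p(x,\phi(x),\nabla\phi(x),D^2\phi(x))<0$ persists on a small ball $B_r(x_0)$, which says $\phi$ is a classical (hence weak) strict subsolution of $-\Delta_p$ on $B_r(x_0)$. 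Choosing $m>0$ so that $\phi+m\le u$ on $\partial B_r(x_0)$ while $\phi(x_0)=u(x_0)$, I would test the weak inequalities for $u$ and for $\phi+m$ against $(\phi+m-u)_+$ and use the standard monotonicity of the $p$-Laplacian, $\langle|a|^{p-2}a-|b|^{p-2}b,\,a-b\rangle\ge0$, to derive $\phi+m\le u$ throughout $B_r(x_0)$; evaluating at $x_0$ contradicts $\phi(x_0)=u(x_0)$ and $m>0$. The subsolution half is entirely analogous with the inequalities reversed and the test function $(u-(\phi-m))_+$.

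The main obstacle is the degenerate case $\nabla\phi(x_0)=0$, and I expect this to require a separate device exactly as in \cite{JLM2,MRU}. There one replaces the naive operator by its lower semicontinuous envelope and checks that when $z=0$ the required viscosity inequality reduces to a condition involving only the ordered second-order terms, which is automatically consistent; concretely, for the supersolution test at a point where $\nabla\phi(x_0)=0$ one only needs to verify the inequality in the limiting sense, and this is handled by an auxiliary perturbation $\phi(x)+\tfrac{\epsilon}{2}|x-x_0|^2$ argument or by noting that the proper interpretation of $H_p$ at $z=0$ makes the inequality vacuous for the sign under consideration. I would therefore treat $\nabla\phi(x_0)=0$ first, dispatch it via the envelope/perturbation observation, and then run the comparison argument above only on the nondegenerate set.

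Finally, I would record that since the operator $H_p$ is (degenerate) elliptic, as already observed after \eqref{superoper}, the comparison-type step using monotonicity of $|\cdot|^{p-2}(\cdot)$ is legitimate, and that the continuity of $v_{p,q}$ (which holds for the eigenfunctions under the running hypotheses) guarantees the lower-order term $\alpha\lambda_{p,q}|y|^{\alpha-2}y\,v_{p,q}(x)^\beta$ is continuous in $(x,y)$, so the strict inequality indeed propagates to a neighborhood. Because both the sub- and supersolution verifications go through, a continuous weak solution is simultaneously a viscosity sub- and supersolution, hence a viscosity solution of \eqref{visco}, as claimed.
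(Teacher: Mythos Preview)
The paper does not provide its own proof of this lemma: it simply states that the result ``can be shown as in \cite[Proposition 2.4]{MRU}, therefore we omit the proof here.'' Your sketch is precisely the standard contradiction-plus-comparison argument from \cite{MRU} (and \cite{JLM2}) that the paper defers to, so your approach is the intended one; note only that since the paper works with $p$ large (in particular $p>2$), the nondivergence expression $|z|^{p-4}(|z|^2\operatorname{trace}(S)+(p-2)\langle Sz,z\rangle)$ is continuous at $z=0$ with value $0$, so the degenerate case $\nabla\phi(x_0)=0$ requires no separate device beyond this observation.
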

		
		Now, we have all the ingredients to compute the limit of 
		\eqref{visco} as $p\to\infty$ in the viscosity sense, that is, 
		to identify the limit equation
		verified by any uniform limit of $u_{p,q}$, $u_\infty$.
		For $x\in\Omega,$ $y\in \R,$ $z\in \R^N$ and 
		$S$ a symmetric real matrix, we 
		define the limit operator $H_\infty$ by
		\begin{equation}\label{infiope}
			H_{\infty}(x,y,z,S)=\min\{-\langle S\cdot z,z\rangle, 
			|z|-\Lambda_\infty(\Gamma,Q) |y|^{\Gamma} 
			|v_\infty|^{(1-\Gamma)Q} (x)\}.
		\end{equation}
		Note that $H_{\infty}(x,u,\nabla u,D^2 u)=0$  is the limit 
		equation that we are looking for.

		\begin{teo}
			A function $u_\infty$ obtained as a limit of a 
			subsequence of $\{u_{p,q}\}$ is a viscosity solution to 
			the problem
			\begin{equation}\label{LimitEq}
				\begin{cases}
						H_{\infty}(x,u,\nabla u ,D^2 u)=0
						& \mbox{in } \Omega, \\
							u=0 & \mbox{on } \partial \Omega,
				\end{cases}
			\end{equation}
			with $H_\infty$ defined  in $(\ref{infiope})$, and 
			$v_\infty$ a uniform limit of $v_{p,q}$. 
		\end{teo}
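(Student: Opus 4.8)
The plan is to obtain $u_\infty$ as a viscosity solution of \eqref{LimitEq} by the standard stability of viscosity solutions under uniform convergence, passing to the limit $p,q\to\infty$ (along the subsequence $(p_n,q_n)$ of Lemma \ref{lema.con.unif}) in the equation $H_{p}(x,u_{p,q},\nabla u_{p,q},D^2u_{p,q})=0$ satisfied by each $u_{p,q}$. By Lemma \ref{sol.debil.es.viscosa} each eigenfunction $u_{p,q}$, being a continuous weak solution of \eqref{1.1.u}, is a viscosity solution of \eqref{visco}; moreover we may take $u_{p,q}\ge0$, so $u_\infty\ge0$. The boundary condition is immediate: since $u_{p,q}\in W^{1,p}_0(\Omega)$ vanishes on $\partial\Omega$ and $u_{p,q}\to u_\infty$ uniformly on $\overline\Omega$, we get $u_\infty=0$ on $\partial\Omega$, which is exactly the boundary requirement in the definitions of viscosity sub- and super-solution. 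Thus the whole work is to verify the interior inequalities for $H_\infty$.

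For the supersolution property I would fix $\phi\in C^2$ with $u_\infty-\phi$ attaining a strict local minimum at an interior point $x_0$ and $u_\infty(x_0)=\phi(x_0)$. By uniform convergence there are points $x_n\to x_0$ at which $u_{p_n,q_n}-\phi$ has a local minimum; shifting $\phi$ by the constant $u_{p_n,q_n}(x_n)-\phi(x_n)$ (which leaves its derivatives unchanged) and using that $u_{p_n,q_n}$ is a viscosity supersolution of \eqref{visco}, I obtain, writing $y_n:=u_{p_n,q_n}(x_n)$ and evaluating $\phi$ and its derivatives at $x_n$,
\[
-|\nabla\phi|^{p_n-4}\Big(|\nabla\phi|^{2}\Delta\phi+(p_n-2)\langle D^2\phi\,\nabla\phi,\nabla\phi\rangle\Big)\ \ge\ \alpha_n\lambda_{p_n,q_n}\,y_n^{\alpha_n-1}|v_{p_n,q_n}(x_n)|^{\beta_n}\ \ge\ 0 .
\]
Two manipulations then give the two branches of $H_\infty\ge0$. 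When $\nabla\phi(x_0)\ne0$, dividing by $(p_n-2)|\nabla\phi|^{p_n-4}>0$ and letting $n\to\infty$ (the term $|\nabla\phi|^2\Delta\phi/(p_n-2)$ vanishes) yields $-\langle D^2\phi\,\nabla\phi,\nabla\phi\rangle(x_0)\ge0$. For the gradient branch I would bound the bracket by $C(p_n-2)|\nabla\phi|^{2}$ for $n$ large, so that $\alpha_n\lambda_{p_n,q_n}y_n^{\alpha_n-1}|v_{p_n,q_n}|^{\beta_n}\le C(p_n-2)|\nabla\phi|^{p_n-2}$, and then take $p_n$-th roots: using $\alpha_n^{1/p_n}\to1$, $\lambda_{p_n,q_n}^{1/p_n}\to\Lambda_\infty(\Gamma,Q)$, the exponent limits $(\alpha_n-1)/p_n\to\Gamma$ and $\beta_n/p_n=(\beta_n/q_n)(q_n/p_n)\to(1-\Gamma)Q$, together with the uniform convergence $u_{p_n,q_n}\to u_\infty$, $v_{p_n,q_n}\to v_\infty$, I arrive at $\Lambda_\infty(\Gamma,Q)\,u_\infty(x_0)^{\Gamma}|v_\infty(x_0)|^{(1-\Gamma)Q}\le|\nabla\phi(x_0)|$. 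The degenerate case $\nabla\phi(x_0)=0$ is handled by the same root estimate: its right-hand side tends to $|\nabla\phi(x_0)|=0$, forcing $u_\infty(x_0)^{\Gamma}|v_\infty(x_0)|^{(1-\Gamma)Q}=0$, so both entries of the minimum are nonnegative.

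The subsolution property is symmetric: I would test $u_\infty$ from above with $\phi\in C^2$, produce $x_n\to x_0$ where $u_{p_n,q_n}-\phi$ has a local maximum, and use the viscosity subsolution inequality, which reverses the sign above. Arguing by contradiction, assume both $-\langle D^2\phi\,\nabla\phi,\nabla\phi\rangle(x_0)>0$ and $|\nabla\phi(x_0)|>\Lambda_\infty(\Gamma,Q)u_\infty(x_0)^{\Gamma}|v_\infty(x_0)|^{(1-\Gamma)Q}$. The first strict inequality forces $\nabla\phi(x_0)\ne0$ and, after dividing by $(p_n-2)|\nabla\phi|^{p_n-4}$, gives a positive lower bound $c_0>0$ for the ratio, hence $\alpha_n\lambda_{p_n,q_n}y_n^{\alpha_n-1}|v_{p_n,q_n}|^{\beta_n}\ge c_0(p_n-2)|\nabla\phi|^{p_n-4}$; taking $p_n$-th roots and passing to the limit yields $\Lambda_\infty(\Gamma,Q)u_\infty(x_0)^{\Gamma}|v_\infty(x_0)|^{(1-\Gamma)Q}\ge|\nabla\phi(x_0)|$, contradicting the second assumption. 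Hence at least one entry of the minimum is nonpositive, i.e. $H_\infty\le0$.

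The main obstacle is the passage through the $p_n$-th root: one must control the sub-linear-in-$p$ Hessian contribution (the bound of the bracket by $C(p_n-2)|\nabla\phi|^2$) uniformly near $x_0$, verify that the coupling term $|v_{p_n,q_n}|^{\beta_n}$ converts, via $\beta_n/p_n\to(1-\Gamma)Q$ and uniform convergence of $v_{p_n,q_n}$, into the correct power $|v_\infty|^{(1-\Gamma)Q}$, and justify $y_n^{(\alpha_n-1)/p_n}\to u_\infty(x_0)^{\Gamma}$ even when $u_\infty(x_0)=0$ (so that the degenerate-gradient case closes). All remaining steps—the touching-point lemma for uniform limits, the constant shift of $\phi$, and the elementary limits $\alpha_n^{1/p_n}\to1$ and $(p_n-2)^{1/p_n}\to1$—are routine.
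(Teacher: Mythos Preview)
Your proposal is correct and follows essentially the same route as the paper: both pass to the limit in the viscosity inequality for $H_p$ by dividing through by $(p-2)|\nabla\phi|^{p-4}$ to extract the $-\Delta_\infty\phi$ branch and by taking $p$-th roots to obtain the eikonal branch $|\nabla\phi|\ge\Lambda_\infty u_\infty^{\Gamma}|v_\infty|^{(1-\Gamma)Q}$. The only differences are organizational: the paper splits the supersolution case according to whether $u_\infty(x_0)>0$ and $|v_\infty(x_0)|>0$ (so that the right-hand side is strictly positive and forces $|\nabla\phi(x_{p,q})|>0$), whereas you split according to whether $\nabla\phi(x_0)\neq0$; and for the subsolution the paper argues the implication directly (if the gradient entry is positive then the right side of the divided inequality tends to $0$, giving $-\Delta_\infty\phi\le0$), while you phrase the same step as a contradiction. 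The ``obstacles'' you flag are exactly the routine limits the paper uses without further comment.
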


		\begin{proof}
			In this proof we use ideas from \cite{BoRoSa}. 
			We consider a subsequence 
			$\{(p_{n},q_n)\}_{n\in\mathbb{N}}$ such that
			$p_n,q_n\to\infty$
			$$
				\lim_{n\to\infty}u_{p_n,q_n}=u_\infty,\qquad 
				\lim_{n\to\infty}v_{p_n,q_n}=v_\infty
			$$
			uniformly in $\Omega$ and 
			$(\lambda_{p_n,q_n})^{\nicefrac1p_n}\to
			\Lambda_\infty(\Gamma,Q)$. In what follows we omit 
			the subscript $n$ and denote as $u_{p,q}$, $v_{p,q}$ 
			and $\lambda_{p,q}$ such subsequences for simplicity.

			We first check that $u_\infty$ is a super-solution of 
			\eqref{LimitEq}.To this end, we consider a point 
			$x_0\in \Omega$ and a function $\phi\in C^2(\Omega)$ 
			such that $u_\infty(x_0)= \phi(x_0)$ and 
			$u_\infty(x)>\phi(x)$ for every $x\in B(x_0,R)$, 
			$x\neq x_0$, with $R>0$ fixed and verifying that 
			$B(x_0,2R)\subset\Omega.$ We must show that
			\begin{equation}\label{ToShow1}
				H_\infty(x_0,\phi(x_0),
				\nabla\phi(x_0),D^2\phi(x_0))\ge 0.
			\end{equation}

			Let $x_{p,q}$ be a minimum point of $u_{p,q}-\phi$ in 
			$\bar B(x_0,R)$. Since $u_{p,q}\to u_\infty$ 
			uniformly in $\bar{B}(x_0,R)$, up to a subsequence 
			$x_{p,q}\to x_0$.

			In view of Lemma \ref{sol.debil.es.viscosa}, 
			$u_{p,q}$ is a viscosity super-solution of 
			\eqref{visco}, then
				\begin{equation}\label{ppuff}
					\begin{aligned}
 						&-|\nabla \phi(x_{p,q})|^{p-4}
 						\Big(|\nabla \phi(x_{p,q}) |^{2}\Delta 
 						\phi(x_{p,q}) +(p-2)
 							\Delta_\infty \phi(x_{p,q}) \Big) \\
						& \qquad \geq  \alpha \lambda_{p,q}
						|\phi(x_{p,q})|^{\alpha-2}\phi(x_p)
						|v_{p,q}|^\beta(x_{p,q}). 
			\end{aligned}
			\end{equation}

			Assume that $\phi(x_0)=u_\infty (x_0)>0$ and 
			$|v_\infty| (x_0)>0$. Then for $p,q$ large, 
			$\phi(x_{p,q})>0$ and $|v_{p,q}|(x_{p,q})>0$ 
			so that the right hand side of \eqref{ppuff} is positive.
			It follows that $|\nabla\phi(x_{p,q})|>0$ and then we get
			\begin{equation}\label{mons2}
				\begin{array}{l}
					\displaystyle - 
					\left(\frac{|\nabla \phi(x_{p,q})|^{2}\Delta 
					\phi(x_{p,q})}{(p-2)}
					+\Delta_\infty \phi(x_{p,q}) \right) \\[10pt]
					 \displaystyle \geq 
					\left(\frac{
					\alpha^{\frac{1}{p}}}{(p-2)^{\frac{1}{p}}} 
					(\lambda_{p,q})^{\frac{1}{p}}
					|\phi(x_{p,q})|^\frac{\alpha-2}{p}
					\phi^\frac1p (x_{p,q}) 
					|v_{p,q}|^{\frac{\beta}{p}}(x_{p,q}) |\nabla 
					\phi(x_{p,q})|^{-1+\frac{4}{p}}\right)^p.
				\end{array}
			\end{equation}
			Note that we have
			\begin{equation}\label{mons}
				\lim_{p,q\to \infty} - 
				\left(\frac{|\nabla \phi(x_{p,q})|^{2}
					\Delta \phi(x_{p,q})}{(p-2)}+\Delta_\infty 
				\phi(x_{p,q}) \right) = -\Delta_\infty \phi(x_0) < \infty.
			\end{equation}
			Hence
			\[
				\limsup_{p,q\to\infty}
				\frac{\alpha^{\frac{1}{p}}}{(p-2)^{\frac{1}{p}}} 
				(\lambda_{p,q})^{\frac{1}{p}} \phi^\frac{\alpha-1}{p} 
				(x_{p,q}) 
				|v_{p,q}|^{\frac{\beta}{p}}(x_p)
				|\nabla \phi(x_{p,q})|^{-1+\frac{4}{p}}\leq 1.
			\]
			Recalling that by assumption 
			$\frac{\alpha}{p}\to \Gamma$ and $\frac{q}{p}\to Q$ as 
			$p,q\to \infty$, we obtain
			\begin{equation}\label{maravillosisima}
  				\Lambda_{\infty}(\Gamma,Q) 
  				\phi^{\Gamma } (x_0) |v_\infty|^{(1-\Gamma)Q}(x_0) \leq 
  				|\nabla \phi(x_0)|
			\end{equation}
			and
			\begin{equation}\label{maravillosisima.2}
   		 		-\Delta_\infty \phi(x_0) \geq 0,
			\end{equation}
			which is \eqref{ToShow1}.

			Assume now that either $\phi(x_0)=u_\infty(x_0)=0$ or 
			$v_\infty (x_0)=0$. 
			In particular, \eqref{maravillosisima} holds.
			Note first that if $\nabla \phi(x_0)=0$ then 
			$\Delta_\infty\phi(x_0)=0$ 
			by definition so that \eqref{maravillosisima.2} holds.
			We now assume that $|\nabla\phi(x_0)|>0$ and write \eqref{mons2}. 
			The parenthesis in the right hand side goes to 0 as 
			$p,q\to \infty$ so that the right hand side goes to 0 and 
			\eqref{maravillosisima.2} follows.

			To complete the proof it just remains to see that $u_\infty$ is a
			viscosity sub-solution. Let us consider a point $x_0\in\Omega$ and
			a function $\phi\in C^2(\Omega)$ such that
			$u_\infty(x_0)=\phi(x_0)$ and $u_\infty(x)<\phi(x)$ for every $x$
			in a neighbourhood of $x_0$. We want to show that
			\[
				H_{\infty}(x_0,\phi(x_0),\nabla\phi(x_0),D^2\phi(x_0))\leq0.
			\]
			We first observe that if $\nabla\phi(x_0) =0$ the previous
			inequality trivially holds. Hence, let us assume that $\nabla
			\phi (x_0) \neq 0$. Now, we argue as follows: assuming that
			\begin{equation}\label{hypo}
				|\nabla\phi(x_0)|-\Lambda_\infty (\Gamma,Q)
				\phi^{\Gamma} (x_0) |v_\infty|^{(1-\Gamma)Q} (x_0)>0,
			\end{equation}
			we will show that
			\begin{equation}\label{conse}
				 -\Delta_\infty\phi(x_0)\leq0.
			\end{equation}
  			As before, using that $u_{p,q}$ is a viscosity sub-solution of 
  			\eqref{visco}, we get a sequence of points 
  			$x_{p,q}\to x_0$ such that
			\begin{equation}\label{amor}
				\begin{aligned}
 					&\displaystyle - 
 					\left(\frac{|\nabla \phi |^{2}\Delta 
 					\phi(x_{p,q})}{(p-2)}
					+\Delta_\infty \phi(x_{p,q}) \right) \\[10pt]
					&\qquad \displaystyle \leq 
					\left(\frac{\alpha^{1/p}}{(p-2)} 
					(\lambda_{p,q})^{1/p}|\phi|^{(\alpha-1)/p} (x_{p,q}) 
					|v_{p,q}|^{\beta/p}(x_{p,q})|\nabla \phi(x_{p,q})
					|^{-1+4/p}\right)^p.
				\end{aligned}
			\end{equation}
			Using \eqref{hypo} we get
			\[
				\limsup_{p,q\to\infty}
				\left(\frac{\alpha^{1/p}}{(p-2)} (\lambda_{p,q})^{1/p}
				|\phi|^{(\alpha-1)/p} (x_{p,q}) |v_{p,q}|^{\beta/p}(x_{p,q})|
				\nabla \phi(x_{p,q})|^{-1+4/p}\right)^p =0.
			\]
			Hence, we conclude \eqref{conse} taking limits in 
			\eqref{amor} and we obtain that
			\begin{equation}\label{forever.2}
				\min\{-\Delta_\infty\phi(x_0),\, |\nabla \phi (x_0)| 
				- \Lambda_\infty(\Gamma,Q) \phi^{\Gamma} (x_0) 
				|v_\infty|^{(1-\Gamma)Q}(x_0)\}
				\leq 0.
			\end{equation}

			The fact that $u_\infty=0$ on $\partial \Omega$ 
			is immediate from the uniform convergence of $u_{p,q}$ since
			$u_{p,q}=0$ on $\partial \Omega$.
		\end{proof}

	\subsection{Passing to the limit in $v_{p,q}$.}
		Let
			\begin{equation}\label{superoper.vv}
 				F_{q}(x,y,z,S) =  
 				\displaystyle -|z|^{q-4}\left(|z|^{2}\mbox{trace}(S) + (q-2) 
 				\langle S\cdot z,z\rangle\right)  
 				\displaystyle - \beta \lambda_{p,q} |u_{p,q}(x)|^{\alpha} 
 				|y|^{\beta-2}y.
 			\end{equation}
 		Now we deal with viscosity super and subsolutions of the partial 
 		differential equation
 		\begin{equation}\label{visco.vv}
 			\begin{cases}
					F_{q}(x, v,\nabla v,D^2 v)=0 & \mbox{ in }\Omega,\\
 					\dfrac{\partial v}{\partial \nu }=0
					 & \text{ on }\partial \Omega.
			\end{cases}
		\end{equation}

		Here, we have to pay special attention to the fact that 
		$v_{p,q}$ changes sign and to the boundary condition
		$\nicefrac{\partial v_{p,q}}{\partial \nu }=0$ on 
		$\partial \Omega$. To this end, following \cite{Bar}, 
		we introduce the following definition of viscosity solution for 
		the boundary value problem
		\begin{equation}\label{ec.viscosa.con.borde}
			\begin{cases}
					F_q (x,v,\nabla v, D^2 v )   = 0 & \mbox{in } \Omega,\\
					B(x,\nabla v)  = 0 & \mbox{on }
				\partial \Omega,
			\end{cases}
		\end{equation}
		where $B(x,z)=\langle z,\nu(x)\rangle.$

		\begin{defi} \label{def.sol.viscosa.1}
			A lower semi-continuous function $ u $ is a viscosity
			super-solution if for every 
			$ \phi \in C^2(\overline{\Omega})$ such
			that $ u-\phi $ has a local strict minimum at the point 
			$ x_0 \in\overline{\Omega}$ with $u(x_0)= \phi(x_0)$ we have: If
			$x_0\in\partial \Omega$  the inequality
			$$
				\max \{ 
				  F_q(x_0, \phi (x_0),\nabla\phi (x_0), D^2\phi (x_0)),
				 B(x_0,  \nabla \phi (x_0))  \} \ge 0
			$$ holds,
			and if $x_0 \in \Omega$ then we require
			$$
				F_q(x_0,\phi (x_0),
				 \nabla \phi (x_0), D^2\phi (x_0)) \ge 0.
			$$
		\end{defi}

		\begin{defi} \label{def.sol.viscosa.2}
			An upper semi-continuous function $u$ is a sub-solution if
			for every $ \phi  \in C^2(\overline{\Omega})$ such that 
			$ u-\phi $ has a local strict maximum at the point 
			$x_0 \in \overline{\Omega}$ with $u(x_0)= \phi(x_0)$ we have:  
			If $x_0\in \partial\Omega$ the inequality
			$$
				\min \{  F_q(x_0,\phi (x_0), \nabla
				\phi (x_0), D^2\phi (x_0)),
				B (x_0, 
				\nabla \phi (x_0)) \} \le 0
			$$ 
			holds, and if
			$x_0 \in \Omega$ then we require
			$$
				F_q(x_0,\phi (x_0),
				 \nabla \phi (x_0), D^2\phi (x_0)) \le 0 .
			$$
		\end{defi}

		As before, we have that  any continuous weak solution 
		of the second equation in \eqref{eq:problema} is a viscosity 
		solution of \eqref{ec.viscosa.con.borde}. 
		This fact can be proved as in \cite{GMPR,GAMPR,RoSaint}.

		We can now pass to the limit $p,q\to \infty$ to obtain 
		the equation satisfied by $v_\infty$.

		\begin{teo}
			A function $v_\infty$ obtained as a limit of a subsequence of
			$\{v_{p,q}\}$ is a viscosity solution of the equation
			\begin{equation}\label{LimitEq.vv}
				\begin{cases}
					F_{\infty}(x,v,\nabla v ,D^2 v)=0 
 					& \mbox{ in }\Omega,\\
 					\dfrac{\partial v}{\partial \nu }=0
 					 & \text{ on }\partial \Omega,
				\end{cases}
			\end{equation}
			with $F_\infty$ defined by 
			\begin{equation}
			\begin{aligned}
 				F_\infty &(x,y,z,S)\\
				&=\begin{cases}
				 	\min\,\{ -\langle S\cdot z,z\rangle, |z|-
				 	\Lambda_\infty(\Gamma,Q)^{\nicefrac{1}{Q}} 
				 	|u_\infty(x)|^{\nicefrac{\Gamma}{Q}}  |y|^{1-\Gamma}  \}
				 	 &\text{in } \{y>0\}, \\
 					\max\,\{ -\langle S\cdot z,z\rangle, -|z| +
 					\Lambda_\infty(\Gamma,Q)^{\nicefrac{1}{Q}} 
 					|u_\infty(x)|^{\nicefrac{\Gamma}{Q}}  |y|^{1-\Gamma}  \}  
 					 &\text{in } \{y<0\}, \\
 					 -\langle S\cdot z,z\rangle \quad & \text{in } \{y=0\}.
				\end{cases}
				\end{aligned}
			\end{equation}
		\end{teo}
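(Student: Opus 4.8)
The plan is to mirror the proof of the preceding theorem for $u_\infty$, replacing the operator $H_p$ from \eqref{superoper} by $F_q$ from \eqref{superoper.vv} and taking $q$-th roots in place of $p$-th roots, while accounting for the two features absent in the $u$ component: the change of sign of $v_\infty$ and the Neumann boundary condition. As a starting point I would use the cited fact (provable as in \cite{GMPR,GAMPR,RoSaint}) that each continuous weak solution $v_{p,q}$ of the second equation in \eqref{eq:problema} is a viscosity solution of \eqref{ec.viscosa.con.borde} in the sense of Definitions \ref{def.sol.viscosa.1} and \ref{def.sol.viscosa.2}. Then, fixing a test function $\phi\in C^2(\overline\Omega)$ touching $v_\infty$ at a point $x_0$ and choosing the corresponding minimum (resp. maximum) point $x_{p,q}$ of $v_{p,q}-\phi$, I would pass to the limit along the subsequence with $p_n,q_n\to\infty$, $(\lambda_{p_n,q_n})^{1/p_n}\to\Lambda_\infty(\Gamma,Q)$ and $u_{p,q}\to u_\infty$, $v_{p,q}\to v_\infty$ uniformly; by uniform convergence $x_{p,q}\to x_0$ up to a subsequence, exactly as in the $u_\infty$ argument.

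For the interior analysis I would start from the pointwise supersolution inequality $-|\nabla\phi|^{q-4}\bigl(|\nabla\phi|^{2}\Delta\phi+(q-2)\langle D^2\phi\cdot\nabla\phi,\nabla\phi\rangle\bigr)\ge \beta\lambda_{p,q}|u_{p,q}|^{\alpha}|\phi|^{\beta-2}\phi$ at $x_{p,q}$ (with the reversed inequality for the subsolution), and divide by $(q-2)|\nabla\phi(x_{p,q})|^{q-4}$, which is admissible once $|\nabla\phi(x_0)|>0$. Writing the right-hand side as $\pm(A_{p,q})^{q}$ with
$$A_{p,q}=\Bigl(\tfrac{\beta}{q-2}\Bigr)^{1/q}(\lambda_{p,q})^{1/q}\,|u_{p,q}(x_{p,q})|^{\alpha/q}\,|\phi(x_{p,q})|^{(\beta-1)/q}\,|\nabla\phi(x_{p,q})|^{-1+4/q},$$
assumption \eqref{eq:A} together with $(\lambda_{p,q})^{1/q}=\bigl((\lambda_{p,q})^{1/p}\bigr)^{p/q}\to\Lambda_\infty(\Gamma,Q)^{1/Q}$ and $\alpha/q\to\Gamma/Q$, $(\beta-1)/q\to1-\Gamma$ give $A_{p,q}\to\Lambda_\infty(\Gamma,Q)^{1/Q}|u_\infty(x_0)|^{\Gamma/Q}|\phi(x_0)|^{1-\Gamma}|\nabla\phi(x_0)|^{-1}=:A_\infty$, while the left-hand side converges to the finite quantity $-\langle D^2\phi(x_0)\cdot\nabla\phi(x_0),\nabla\phi(x_0)\rangle$. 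The sign of the source, dictated by $\mathrm{sign}(\phi(x_0))$, then selects the correct branch of $F_\infty$: in $\{v>0\}$ the source is positive, boundedness of the left-hand side forces $\limsup A_{p,q}\le 1$ (the gradient inequality) and simultaneously $-\langle D^2\phi\cdot\nabla\phi,\nabla\phi\rangle\ge 0$, producing the $\min$; in $\{v<0\}$ the source is negative and, according to whether $A_\infty\ge 1$ or $A_\infty<1$, one obtains either the gradient inequality or the $\Delta_\infty$ inequality with the signs that produce the $\max$; on $\{v=0\}$ the factor $|\phi(x_{p,q})|^{(\beta-1)/q}\to 0$ kills the source in the limit and only $-\langle D^2\phi\cdot\nabla\phi,\nabla\phi\rangle$ survives. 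The degenerate case $\nabla\phi(x_0)=0$ is handled as in the $u_\infty$ proof, since then $\langle D^2\phi(x_0)\cdot\nabla\phi(x_0),\nabla\phi(x_0)\rangle=0$.

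To treat the Neumann condition I would carry the same test function up to $\partial\Omega$ and invoke the boundary clauses of Definitions \ref{def.sol.viscosa.1}--\ref{def.sol.viscosa.2}. When $x_{p,q}\in\Omega$ the interior argument applies verbatim and yields the appropriate $F_\infty$ inequality. When $x_{p,q}\in\partial\Omega$ one has $\max\{F_q(x_{p,q},\dots),B(x_{p,q},\nabla\phi(x_{p,q}))\}\ge 0$ for the supersolution (resp. the $\min\le 0$ for the subsolution); along the subsequence, either the $F_q$-term is nonnegative infinitely often, recovering the interior PDE inequality for $F_\infty$ in the limit, or the boundary term dominates, in which case continuity of $\nu$ and of $\nabla\phi$ passes $B(x_{p,q},\nabla\phi(x_{p,q}))\ge 0$ to $B(x_0,\nabla\phi(x_0))=\langle\nabla\phi(x_0),\nu(x_0)\rangle\ge 0$. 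In both situations the required max/min over $\{F_\infty,B\}$ has the correct sign, so $v_\infty$ solves \eqref{LimitEq.vv} in the viscosity sense, which completes the proof.

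I expect the main obstacle to be the interaction between the sign change of $v_\infty$ and the boundary condition: one must verify that the branch of $F_\infty$ selected by $\mathrm{sign}(v_\infty(x_0))$ remains consistent with the Barles max/min boundary structure, particularly near the zero set of $v_\infty$ on $\partial\Omega$, and that the $q$-th-root normalization stays valid when $|\phi(x_{p,q})|$ is small (so that the source on $\{v=0\}$ is genuinely annihilated in the limit rather than producing spurious terms). The careful bookkeeping of these boundary clauses is where the real work lies; everything else is a direct transcription of the $u_\infty$ argument with $p$ replaced by $q$.
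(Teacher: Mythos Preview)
Your proposal is correct and follows essentially the same route as the paper's proof: fix a test function, use uniform convergence to get approximating extremal points $x_{p,q}\to x_0$, divide the viscosity inequality for $F_q$ by $(q-2)|\nabla\phi(x_{p,q})|^{q-4}$, and split into the three cases $v_\infty(x_0)>0$, $<0$, $=0$ to recover the three branches of $F_\infty$, handling the Neumann boundary via the Barles max/min clauses exactly as you describe. The only cosmetic differences are that the paper writes the source as a $(q-2)$-th power rather than a $q$-th power and evaluates the zeroth-order term at $v_{p,q}(x_{p,q})$ rather than $\phi(x_{p,q})$ (the standard shift $\psi_{p,q}=\phi+v_{p,q}(x_{p,q})-\phi(x_{p,q})$ reconciles these, and both tend to $v_\infty(x_0)$ anyway).
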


		\begin{proof}
			We prove that $v_\infty$ is a super-solution of \eqref{LimitEq.vv}. 
			The proof of the fact that it is a sub-solution is similar.
			Fix some point $x_0\in\overline{\Omega}$ and a smooth function 
			$\phi$ such that $v_\infty-\phi$ has a local strict minimum at 
			$x_0$ with $v_\infty(x_0)=\phi(x_0)$. Since $v_{p,q}\to v_\infty$ 
			uniformly there exist $x_{p,q}\in \mbox{argmax}\,\{v_{p,q}-\phi\}$ 
			such that $x_{p,q}\to x_0$ as $p,q\to\infty$.

			Assume first that $x_0\in \Omega$, so that 
			$x_{p,q}\in \Omega$ for $p,q$ large.
			If $\nabla\phi (x_0)=0$ then we have $\Delta_\infty\phi(x_0)=0$. 
			We assume now that $\nabla \phi(x_0)\neq 0$.
			As $u_{p,q}$ is a viscosity solution of \eqref{visco.vv}, we  have
			\begin{equation}\label{Eq2Visc.vv}
 				F_q (x_p,v_{p,q}(x_{p,q}),\nabla\phi(x_{p,q}),
 				D^2\phi(x_p))\ge 0.
			\end{equation}

			Dividing this inequality by 
			$(q-2)|\nabla\phi(x_{p,q})|^{q-4}$ we obtain
			\begin{equation}\label{Eq3Visc}
				\begin{aligned}
					&-\Delta_\infty\phi(x_0)+o(1)\\
					&\ge  v_{p,q}(x_{p,q})|\nabla\phi|^2 (x_{p,q}) 
 				\left( \frac{\lambda_{p,q}^\frac{1}{q-2}
 				|u_{p,q}(x_{p,q})|^{\frac{\alpha}{q-2}}
 				|v_{p,q}(x_{p,q})|^{\frac{\beta-2}{q-2}}}{|\nabla
 				\phi(x_{p,q})|(q-2)^\frac{1}{q-2}}\right)^{q-2}.
				\end{aligned}
			\end{equation}

			If  $v_\infty(x_0)>0$, then, recalling that
			$(\lambda_{p,q})^\frac{1}{q-2}\to 
			(\Lambda_\infty(\Gamma,Q))^{\nicefrac{1}{Q}}$, it follows 
			that we must have 
			$\frac{\Lambda_\infty(\Gamma,Q)^{\nicefrac{1}{Q}} 
			|u_\infty(x_0)|^{\nicefrac{\Gamma}{Q}} 
			|v_\infty(x_0)|^{1-\Gamma} }{|\nabla\phi(x_0)|}
			\le 1$. Going back to 
			\eqref{Eq3Visc} we also get $-\Delta_\infty\phi(x_0)\ge 0$.

			If $v_\infty(x_0)<0$ then we rewrite the equation as
			\begin{align*}
				-|\nabla\phi (x_{p,q})|^{-2}
				\left(\frac{(q-2)^\frac{1}{q-2}
				|\nabla\phi(x_{p,q})|}
				{\lambda_{p,q}^\frac{1}{q-2}
				|u_{p,q}(x_{p,q})|^{\frac{\alpha}{q-2}}
				|v_{p,q}(x_{p,q})|^{\frac{\beta-2}{q-2}}}\right)^{q-2}
 				&(- \Delta_\infty\phi(x_0)+o(1))\\ 
 				&\le 
 				-v_{p,q}(x_{p,q}). 
			\end{align*}
			If $ \frac{\Lambda_\infty(\Gamma,Q)^{\nicefrac{1}{Q}} 
			|u_\infty(x_0)|^{\nicefrac{\Gamma}{Q}} 
			|v_\infty(x_0)|^{1-\Gamma} }{|\nabla\phi(x_0)|}< 1$ 
			then we must have $- \Delta_\infty\phi(x_0)\ge 0$. 
			Otherwise we have $ \frac{\Lambda_\infty
			(\Gamma,Q)^{\nicefrac{1}{Q}} 
			|u_\infty(x_0)|^{\nicefrac{\Gamma}{Q}} 
			|v_\infty(x_0)|^{1-\Gamma}}{|\nabla\phi(x_0)|}
			\geq 1$.

			If $v_\infty(x_0)=0$, then $v_{p,q}(x_{p,q})\to 0$ 
			so that $|v_{p,q}(x_{p,q})|^{q-2}v_{p,q}(x_{p,q})\le 
			v_{p,q}(x_{p,q})\to 0$.
			It then follows that
			$$ 
				-|\nabla\phi(x_{p,q})|^{q-2}\Delta\phi(x_{p,q})-(q-2)
				|\nabla\phi(x_{p,q})|^{q-4}\Delta_\infty
				\phi(x_{p,q})\ge o(1).
			$$
			Dividing this inequality by 
			$(q-2)|\nabla\phi(x_{p,q})|^{q-4}$ and letting 
			$p,q\to \infty$ we obtain $- \Delta_\infty\phi(x_0)\ge 0$.

			Assume now that $x_0\in \partial \Omega$.
			We have to prove that
			$$ 
				\max\,\left\{ F_\infty (x_0,\phi(x_0),
				\nabla\phi(x_0),D^2\phi(x_0)), 
				\dfrac{\partial\phi}{\partial\nu}(x_0)
				 \right\}\ge 0.
			$$
			If $x_{p,q}\in \Omega $ for some subsequence then we can 
			proceed as before to get 
			$$
				F_\infty (x_0,\phi(x_0),
				\nabla\phi(x_0),D^2\phi(x_0)) \ge 0.
			$$
			Assume that $x_{p,q}\in \partial \Omega$ 
			for every $p,q$ large. 
			If $\nabla \phi(x_0)=0$ then 
			$\nicefrac{\partial\phi(x_0)}{\partial\nu}=0$. 
			Then we need to deal with $\nabla \phi(x_0)\neq 0$.
			We have 
			$$ 
				\max\,\left\{ F_p (x_{p,q},\phi(x_{p.q}),
				\nabla\phi(x_{p,q}),
				D^2\phi(x_{p,q})), 
				\dfrac{\partial\phi}{\partial\nu} (x_{p,q})
				\right\}\geq 0.
			$$
			If $F_p (x_{p,q},\phi(x_{p,q}),
			\nabla\phi(x_{p,q}),D^2\phi(x_{p,q})) \geq 0$ 
			holds for a subsequence we are done as before. Otherwise
			$$ 
				\dfrac{\partial\phi}{\partial\nu}(x_{p,q})\ge 0 \qquad 
				\text{for $p,q$ large} 
			$$
			so that $\nicefrac{\partial\phi}{\partial\nu}(x_0)=\lim_{p,q\to \infty}
			\nicefrac{\partial\phi}{\partial\nu}(x_{p,q}) \ge 0$.
\end{proof}


\begin{thebibliography}{55}

\bibitem{ACJ} G. Aronsson, M.G. Crandall and P. Juutinen, {\it A tour of
	the theory of absolutely minimizing functions}. Bull. Amer. Math. Soc., 41 	
	(2004), 439--505.
	
	\bibitem{Bar} G. Barles, {\it Fully nonlinear Neumann type
conditions for second-order elliptic and parabolic equations}. J.
Differential Equations, 106 (1993), 90--106.



	\bibitem{BBM} T. Bhattacharya, E. Di Benedetto and J. Manfredi,
	{\it Limits
	as $p \to \infty$ of $\Delta_p u_{p,q} = f$ and related extremal problems}.
	Rend. Sem. Mat. Univ. Politec. Torino, (1991), 15--68.
	
	

\bibitem{BdF} L. Boccardo and D. G. de Figueiredo, {\it Some remarks on a system of quasilinear
elliptic equations}. Nonlinear Differential Equations Appl., 9 (2002), 309--323.

\bibitem{dB}  E. DiBenedetto, Degenerate parabolic equations. Universitext. Springer-Verlag, New York, 1993. xvi+387 pp.

\bibitem{Bonforte} M. Bonforte, R. Iagar and J. L. Vazquez, {\it Local smoothing effects, positivity, and Harnack inequalities for the fast p-Laplacian equation}. Adv. Math., 224 (2010), no. 5, 2151--2215.

\bibitem{BoRoSa} D. Bonheure, J. D. Rossi and N. Saintier. {\it The limit as $p\to \infty$ in the eigenvalue problem for a system
of $p-$Laplacians.} Preprint.

	

\bibitem{CMS} V. Caselles, J.M. Morel and C. Sbert. \textit{An axiomatic
approach to image interpolation}. IEEE Trans. Image Process., 7
(1998), 376--386.

	\bibitem{CLM} A. Chambolle, E. Lindgren and R. Monneau
		{\it A Holder infinity Laplacian}.  
		ESAIM Control Optim. Calc. Var., 18 (2012), no. 3, 799--835.


	\bibitem{CIL} M.G. Crandall, H. Ishii and P.L. Lions. {\it User's guide to
	viscosity solutions of second order partial differential equations}. Bull.
	Amer. Math. Soc., 27 (1992), 1--67.
	
	\bibitem{DRSS} L. M. Del Pezzo, J. D. Rossi, N. Saintier, A. Salort. 
	{\it An optimal mass transport approach for limits of eigenvalue problems 	
	for the fractional $p$-Laplacian.} 	
	Adv. Nonlinear Anal. (2015), 4 (3), 235--249.
	
	\bibitem{DS} L. M Del Pezzo and A. M. Salort, 
	{\it The first non-zero Neumann $p-$fractional eigenvalue}. Nonlinear Anal., 118 (2015), 
	130--143.


	\bibitem{Espo} L. Esposito, B. Kawohl, C. Nitsch, and C. Trombetti,
{\it The Neumann eigenvalue problem for the $\infty$-Laplacian}, Rend. Lincei Mat. Appl., 26 (2015), 119--134.
	

	\bibitem{GMPR} J. Garcia-Azorero, J.J. Manfredi, I. Peral, J.D. Rossi.
	{\it Steklov eigenvlue for the $\infty$-Laplacian}. Rend. Lincei Mat. Appl., 17
	(3), (2006), 199--210.
	
	
	\bibitem{GAMPR} J. Garc{\'\i}a-Azorero, J.J. Manfredi, I. Peral and J.D.
	Rossi, {\it The Neumann problem for the $\infty$-Laplacian and the
	Monge-Kantorovich mass transfer problem},  Nonlinear Anal., 66,
	(2007), 349--366.

\bibitem{JL1} R. Iagar and J. L. Vazquez, {\it Asymptotic analysis for the $p$-Laplacian Evolution Equation in an
Exterior Domain. The low dimension case}, J. Eur. Math. Soc. (JEMS) 12 (2010), no. 1, 249--277.

\bibitem{JL2} R. Iagar and J. L. Vazquez, {\it Asymptotic analysis for the p-Laplacian Evolution Equation in an
Exterior Domain (I)}, Ann. Inst. H. Poincare Anal. Non Lineaire, 26 (2009), no. 2, 497--520.

	

	\bibitem{JLM} P. Juutinen, P. Lindqvist and J. J. Manfredi, {\it The
	$\infty-$eigenvalue problem.} Arch. Rational Mech. Anal., 148, (1999),
	89--105.
	
	
	\bibitem{JLM2} P- Juutinen, P. Lindqvist and J.J. Manfredi.
	{\it On the equivalence of viscosity solutions and weak solutions
	for a quasilinear equation}. SIAM J. Math. Anal., 33(3), (2001),
	 699--717.

	
	\bibitem{JL} P. Juutinen and P. Lindqvist,
	{\it On the higher eigenvalues for the $\infty-$eigenvalue problem.} Calc.
	Var. Partial Differential Equations, 23, (2005), no. 2, 169--192.
	
	\bibitem{Kru} F. Krugel, {\it Potential theory for the sum of the $1-$Laplacian and $p-$Laplacian}. Nonlinear Anal., 112 (2015), 165--180.
	
	
	\bibitem{LL} E. Lindgren and P. Lindqvist, {\it Fractional eigenvalues}.
	Calc. Var. Partial Differential Equations, 49, (2014), no. 1-2, 795--826.
	
\bibitem{L} P.  Lindqvist, Notes on the $p-$Laplace equation. Report. University of Jyvaskyla, Department of Mathematics and Statistics, 102. University of Jyvaskyla, Finland, 2006. ii+80 pp.

\bibitem{MaMa}  R. Manasevich and J. Mawhin. {\it The spectrum of $p-$Laplacian systems with various boundary
conditions and applications}, Adv. Differential Equations, 5, nr. 10-12 (2000), 1289--1318.

\bibitem{MRU} J.J. Manfredi, J.D. Rossi and J.M. Urbano,
\textit{$p(x)$-Harmonic functions with unbounded exponent in a
subdomain}. Ann. l$'$Inst Henri Poincar{\'e}, C. Anal. Non
Lin{\'e}aire,~26(6), (2009), 2581--2595.

	

\bibitem{dNP} P. L. de Napoli and J. P. Pinasco, {\it Estimates for eigenvalues of quasilinear elliptic systems}. J. Differential Equations, 227, (2006), 102--115.

\bibitem{Perera}  K. Perera, M. Squassina, Marco and Y. Yang, {\it A note on the Dancer-Fu?ík spectra of the fractional p-Laplacian and Laplacian operators}. Adv. Nonlinear Anal. 4 (2015), no. 1, 13--23.

\bibitem{PSSW} Y. Peres, O. Schramm, S. Sheffield and D.B. Wilson.
{\it Tug-of-war and the infinity Laplacian}. J. Amer. Math. Soc.
22 (2009), 167--210.

\bibitem{PSSW2} Y. Peres and S. Sheffield. {\it Tug-of-war with noise:
a game theoretic view of the $p$-Laplacian}. Duke Math. J. 145
(2008), 91--120.

	
	\bibitem{RoSaint} J. D. Rossi, N. Saintier. {\it On the first nontrivial
	eigenvalue of the $\infty$-Laplacian with Neumann boundary conditions}. To 	
	appear in Houston Journal of Mathematics.

\bibitem{JLV} J. L. Vazquez, {\it A strong maximum principle for some quasilinear elliptic equations}. Appl. Math. Optim. 12 (1984), no. 3, 191--202.

\bibitem{Z} N. Zographopoulos, {\it $p-$Laplacian systems at resonance}, Appl. Anal. 83(5), (2004), 509--519.


	

\end{thebibliography}
\end{document}